\long\def\eatit#1{}
\newtheorem{thm}{Theorem}[subsection]
\newtheorem{prop}[thm]{Proposition}
\newtheorem{lem}[thm]{Lemma}
\newtheorem{cor}[thm]{Corollary}
\newtheorem{fact}[thm]{Fact}
\newtheorem{defn}[thm]{Definition}
\newtheorem{example}[thm]{Example}
\newtheorem{conj}[thm]{Conjecture}
\newtheorem{ques}[thm]{Question}
\newtheorem{rem}[thm]{Remark}
\newcommand{\pr}[1]{{{\bf P}^{#1}}}
\begin{document}

\title{Are symbolic powers highly evolved?}

\author{Brian Harbourne \& Craig Huneke}

\address{Brian Harbourne\\
Department of Mathematics\\
University of Nebraska\\
Lincoln, NE 68588-0130, USA}
\email{bharbour@math.unl.edu}

\address{Craig Huneke\\ 
Department of Mathematics\\
University of Kansas\\
Lawrence, KS 66045-7523, USA}
\email{huneke@math.ukans.edu}

\date{September 8, 2011}

\thanks{The first author's work on this project was sponsored by the National Security Agency under Grant/Cooperative agreement ``Advances on Fat Points and Symbolic Powers,'' Number H98230-11-1-0139. The United States Government is authorized to reproduce and distribute reprints notwithstanding any copyright notice. The second author gratefully acknowledges the support of the NSF. We also thank R. Lazarsfeld for bringing the paper \cite{refEV} to our attention and C. Bocci, for helpful discussions.}

\begin{abstract} Searching for structural reasons behind old results 
and conjectures of Chudnovksy regarding the
least degree of a nonzero form in an ideal of fat points in $\pr N$, we 
make conjectures which explain them, and we prove the conjectures 
in certain cases, including the case of general 
points in $\pr 2$. Our conjectures were also partly motivated by 
the Eisenbud-Mazur Conjecture on 
evolutions, which concerns
symbolic squares of prime ideals in local rings, but in contrast we consider
higher symbolic powers of homogeneous ideals in polynomial rings.
\end{abstract}

\subjclass[2000]{Primary: 
13F20, 
14C20; 
Secondary: 
13A02, 
13C05, 
14N05} 

\keywords{evolutions, symbolic powers, fat points, projective space}

\maketitle

\renewcommand{\thethm}{\thesection.\arabic{thm}}
\setcounter{thm}{0}

\section{Introduction}\label{section: i}

Both authors of this paper have been interested for many years
 in the behavior of symbolic powers of ideals
in regular rings, especially in polynomial rings. In the case of defining ideals of points,
symbolic powers are special cases of ideals of so-called fat points, and their study provides a meeting
ground of geometry and algebra. Hereafter $R$ will denote the ring $K[x_0,\ldots,x_N]=K[\pr N]$
where $K$ is a field and $I\subseteq R$ will be a homogeneous ideal,
with $M=(x_0,\ldots,x_N)$ being the maximal homogeneous ideal in $R$.
Many of our arguments hold for arbitrary fields, but in some cases the field $K$ must be infinite.
By symbolic power we mean $I^{(m)}=R\cap(\cap_P (I^m)_P)$
where the intersections take place in the field of fractions of
$K[\pr N]$, and the second intersection is over all
associated primes $P$ of $I$. An important special case
is that of ideals of fat points;
i.e., $I=\cap_i I(p_i)^{m_i}$ for non-negative integers $m_i$
and a finite set of distinct
points $p_i\in\pr N$, where $I(p_i)$ is the ideal
generated by all forms that vanish at $p_i$. In this case $I^{(m)}$
is just $\cap_i I(p_i)^{mm_i}$. When $\pr N$ is clear, we denote the subscheme
defined by $I$ by $Z=m_1p_1+\cdots+m_np_n$, and denote its ideal $I$ by $I(Z)$.
In this case, $I^{(m)}$ becomes $I(mZ)$.
 
If $J$ is a homogeneous ideal, we let  $\alpha(J)$ be
the least degree of a polynomial in $J$.
Let $I$ be the radical ideal of a finite set of points in $\pr N$.
Using complex analytic techniques, Waldschmidt and Skoda \cite{refW, refSk} showed that
$$\alpha(I^{(m)})/m\ge \alpha(I)/N\eqno{({}^\circ)}$$ 
for every $m>0$.
Interestingly, this result also follows
for any homogeneous ideal $I\subseteq R = K[\pr N]$ because 
$I^{(Nm)}\subseteq I^m$ by \cite{refELS}, \cite{refHH}
and clearly $(I^{(m)})^N\subseteq I^{(mN)}$.
These containments imply that
$N\alpha(I^{(m)})=\alpha((I^{(m)})^N)\ge\alpha(I^{(Nm)})\ge\alpha(I^m)=m\alpha(I)$, so
$\alpha(I^{(m)})/m\ge\alpha(I^{(Nm)})/(Nm)\ge\alpha(I)/N$. 

When $N=2$, Chudnovsky \cite{refCh} improved the bound of Waldschmidt and Skoda \cite{refW, refSk}.
Since only a sketch of Chudnovksy's proof is given in \cite{refCh}, we later give a 
proof (see Proposition~\ref{Chudprop}). 
Chudnovsky's improvement  is the following: Let $p_1,\ldots,p_n\in\pr2$ be distinct points.
Let $I=\cap_i I(p_i)\subset K[\pr2]$.
Then
$\alpha(I^{(m)})/m\ge (\alpha(I)+1)/2$
for all $m>0$.

Because the result of Waldschmidt and Skoda can be explained by a general property of
symbolic powers, that
$I^{(Nm)}\subseteq I^m$ for all $m$, it is natural to speculate whether or not there is
a similar property which might underlie Chudnovsky's improved bound. This led us 
to our first conjecture, which gives a structural reason for the result of
Chudnovsky. Namely, we conjecture that $I^{(2r)}\subseteq M^rI^r$ for an ideal $I$ of points
in $\pr 2$. This conjecture, which we prove for general points (see Proposition~\ref{genptsprop}), easily implies the result of Chudnovsky. We generalize this
conjecture
in a natural way to arbitrary dimension.
 As it turns out, a positive answer to our more general conjecture also gives a positive answer to
a conjecture of Chudnovsky, that if $I$ is the ideal of a finite set of points in $K[\pr N]$, then $\alpha(I^{(m)})/m\geq (\alpha(I) + N -1)/N$.

Our conjecture relates to evolutions.
Evolutions are certain kinds of ring homomorphisms that arose 
in proving Fermat's Last Theorem \cite{refF, refTW, refWi}; see
\cite{refB} for an exposition. 
An important step in the proof was to show in certain cases
only trivial evolutions occurred. Eisenbud and Mazur \cite{refEM}
showed the question of triviality (which for the work 
of Wiles was in mixed characteristic) could be translated into
a statement  involving symbolic powers. They then made the following conjecture
in characteristic $0$: 

\begin{conj}[Eisenbud-Mazur]\label{EMconj}
Let $P\subset {\bf C}[[x_1,\ldots,x_d]]$ be a prime ideal. 
Then $P^{(2)}\subseteq MP$, where $M=(x_1,\ldots,x_d)$. 
\end{conj}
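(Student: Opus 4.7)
The Eisenbud-Mazur conjecture is a notoriously difficult open problem, so the proposal below describes the standard strategy and identifies the obstacle that has kept it open.

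The plan is to combine a differential criterion for symbolic squares with a formal "integration" argument. Suppose $f \in P^{(2)}$; then in $R_P$ one has $f \in P^2 R_P$, so $sf \in P^2$ for some $s \notin P$. First I would reduce to the case $f \in P$ and assume, after localization at $P$, that $R/P$ is regular at $P$. Under this regularity hypothesis, membership in $P^{(2)}$ is equivalent to the vanishing of $f$ together with all of its partial derivatives along $V(P)$; that is, $\partial f/\partial x_i \in P$ for every $i$. The goal is then to exhibit $f$ explicitly as an element of $MP$.

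The core step exploits an Euler-type identity. In the graded setting, where $f$ is homogeneous of degree $n$, the classical identity $n \cdot f = \sum_i x_i \, \partial f/\partial x_i$ places $f$ immediately in $MP$, since each $\partial f/\partial x_i$ lies in $P$ and each $x_i$ lies in $M$; the characteristic zero hypothesis is crucial for dividing by $n$. So under regularity and homogeneity, the conjecture reduces to a Jacobian computation. The strategy is then to transfer this graded argument to the formal power series ring, either by passing to the associated graded ring $\mathrm{gr}_M R$, by deforming to the tangent cone, or by integrating term-by-term in a carefully chosen filtration.

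The main obstacle is precisely this transfer. There is no clean analogue of the Euler identity in the local, non-graded setting, and moving to $\mathrm{gr}_M R$ is unsafe because the initial ideals of $P$ and $P^{(2)}$ can fail to interact compatibly (the initial ideal of $P^{(2)}$ need not equal the symbolic square of the initial ideal of $P$); moreover, the differential criterion for $P^{(2)}$ requires regularity of $R/P$ at $P$, which can fail. These twin obstructions, the need for homogeneity to apply Euler and the need for regularity to apply the Jacobian criterion, are exactly what have kept the conjecture open in full generality; the partial results in the literature (monomial primes, certain complete intersections, the quasihomogeneous case) each succeed by circumventing one of these two obstacles, but not both simultaneously.
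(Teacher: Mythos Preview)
The statement you were asked to address is a \emph{conjecture}, not a theorem; the paper does not prove it and presents it explicitly as an open problem. Your proposal correctly recognizes this and does not claim a proof, so in that sense there is no gap to diagnose.

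Your discussion of the graded case via the Euler identity is exactly what the paper records as Fact~\ref{homogEMfact}: for a homogeneous $f\in I^{(2)}$ in characteristic~$0$, each $\partial f/\partial x_i\in I$, and the Euler relation $\deg(f)\,f=\sum_i x_i\,\partial f/\partial x_i$ places $f$ in $MI$. The paper leaves it at that and moves on to its own higher-power analogues; it does not attempt the local power-series case at all. Your additional commentary on why the Euler argument fails to transfer (no non-graded Euler identity, incompatibility of initial ideals with symbolic powers, failure of the Jacobian criterion when $R/P$ is not regular at $P$) goes beyond anything the paper says and is a reasonable informal summary of the known obstructions. One small caution: the differential criterion ``$f\in P^{(2)}\Rightarrow \partial f/\partial x_i\in P$'' holds without any regularity hypothesis on $R/P$ (it follows from the Zariski--Nagata description of symbolic powers in a regular ambient ring, or directly from the product rule applied to $sf\in P^2$ with $s\notin P$); regularity of $R/P$ is needed only for the converse, so your phrasing slightly overstates where that hypothesis enters.
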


Our main conjecture can at least heuristically be thought of as a generalization of
the conjecture of Eisenbud-Mazur to higher symbolic powers.
The homogeneous version of Conjecture \ref{EMconj} 
for symbolic squares is easy to verify:

\begin{fact}\label{homogEMfact}
Let $I\subseteq K[x_0,\ldots,x_N]$ be a proper
homogeneous ideal where ${\rm char}(K)=0$. 
Then $I^{(2)}\subseteq MI$. 
\end{fact}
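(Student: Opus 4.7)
The plan is to combine Euler's identity with the Leibniz rule for the polynomial derivations $\partial/\partial x_i$, in the spirit of classical Zariski--Nagata arguments.

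First I would reduce to a homogeneous element $f \in I^{(2)}$ of positive degree $d$. Since $I^{(2)}$ is homogeneous, it suffices to treat homogeneous elements; and since $I^{(2)} \subseteq I^{(1)} = I \subseteq M$ (using that $I$ is a proper homogeneous ideal, together with the fact from the theory of associated primes that $I = R \cap \bigcap_{P \in \mathrm{Ass}(I)} I_P$), no nonzero constant belongs to $I^{(2)}$, so we may assume $d \ge 1$. Euler's identity then gives $d \cdot f = \sum_{i=0}^N x_i\, \partial f/\partial x_i$. Because $\mathrm{char}(K) = 0$ and $d \ge 1$, the integer $d$ is invertible in $K$, so to conclude $f \in MI$ it is enough to show $\partial f/\partial x_i \in I$ for every $i$.

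The main step is this last claim, which I would verify one associated prime at a time. Fix $P \in \mathrm{Ass}(I)$. Then $f \in (I^2)_P = (I_P)^2$, so there is an expression $f = \sum_j a_j b_j$ with $a_j, b_j \in I_P$. Each polynomial derivation $\partial/\partial x_i$ extends naturally to a $K$-derivation of $R_P$, and the Leibniz rule gives
$$\frac{\partial f}{\partial x_i} = \sum_j \left( \frac{\partial a_j}{\partial x_i}\, b_j + a_j\, \frac{\partial b_j}{\partial x_i} \right) \in I_P.$$
Since this holds for every $P \in \mathrm{Ass}(I)$ and $\partial f/\partial x_i$ is already a polynomial, the definition of symbolic power gives $\partial f/\partial x_i \in R \cap \bigcap_P I_P = I^{(1)} = I$, as needed.

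There is no substantive obstacle in this approach; the only delicate point is the invocation of characteristic zero when inverting $d$ after Euler's identity, and this hypothesis is known to be unavoidable even for the classical Zariski--Nagata characterization of symbolic powers in positive characteristic.
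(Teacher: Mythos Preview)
Your proof is correct and follows essentially the same approach as the paper: both use Euler's identity together with the fact that $\partial F/\partial x_i \in I$ for $F \in I^{(2)}$. The paper simply asserts the latter fact without justification, whereas you supply the verification via the Leibniz rule at each localization $R_P$ and the identity $I^{(1)} = I$; this extra detail is welcome but does not constitute a different method.
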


\begin{proof} For any $F\in I^{(2)}$ we have 
$\partial F/\partial x_i\in I$; if ${\rm char}(K)=0$, then 
by the Euler identity we have
${\rm deg}(F)F=\sum_ix_i\partial F/\partial x_i\in MI$, 
so $I^{(2)}\subseteq MI$.
\end{proof}

The general question we wish to raise is:

\begin{ques}\label{higherEvoQues}
Let $I\subset R$ be a homogeneous ideal. 
For which $m,i$ and $j$ do we have $I^{(m)}\subseteq M^jI^i$? 
\end{ques}

A complete answer will typically depend on $I$, but it is also of 
interest to ask what holds for all $I$, knowing only $N$.
Since $M^jI^i\subseteq I^i$, we see that 
whenever $I^{(m)}\subseteq M^jI^i$ is true we also have $I^{(m)}\subseteq I^i$.
It is known that $I^{(m)}\subseteq I^i$ holds whenever
$m/i\ge N$ \cite{refELS}, also \cite{refHH}, and that
whenever $m/i<N$, there exist ideals $I$ for which
$I^{(mt)}\subseteq I^{it}$ fails for $t\gg0$ \cite{refBH}. 

Thus an interesting starting point is:

\begin{ques}\label{higherEvoQues2}
Let $I\subset R$ be a homogeneous ideal. 
For which $j$ does $I^{(rN)}\subseteq M^jI^r$
hold for all $I$ and all $r$? 
\end{ques}

The best known general results concerning this question are found in \cite{refHH1} and \cite{refTY}.

In Section 2 we state our main conjecture, and prove it in a very special case. Section 3 
gives the proof of the result of Chudnovsky, which we then slightly generalize and relate
to the Noetherian property for symbolic power algebras. We prove our conjecture for general
points in $\pr 2$.  The last section relates our work to a conjecture of the first author,
and has further speculations and examples. 

\bigskip

\renewcommand{\thethm}{\thesection.\arabic{thm}}
\setcounter{thm}{0}

\section{An optimistic conjecture}\label{section: optconj}

\medskip

Note that $I^{(rN)}\subseteq M^jI^r$ fails in
general if $j>r(N-1)$, even for $I = M$. So the best we can hope for is:

\begin{conj}\label{fatptconj}
Let $I=\cap_i I(p_i)^{m_i}\subset K[\pr N]$ be any fat points ideal. 
Then $I^{(rN)}\subseteq M^{r(N-1)}I^r$
holds for all $r>0$. 
\end{conj}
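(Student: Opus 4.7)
The natural first attempt is to mimic the proof of Fact~\ref{homogEMfact} using the iterated Euler identity in characteristic zero. For a homogeneous $F \in I^{(rN)}$ of degree $d$, vanishing to order $rNm_i$ at any $p_i$ forces $d \ge rN \ge r(N-1)$, so the identity
$$\frac{d!}{(d-k)!}\,F \;=\; \sum_{|\alpha|=k} \binom{k}{\alpha}\, x^\alpha\, \partial^\alpha F$$
applies with invertible leading coefficient for $k = r(N-1)$. A quick check gives $rNm_i - r(N-1) \ge rm_i$ for every $m_i \ge 1$, so $\partial^\alpha F \in \bigcap_i I(p_i)^{rm_i} = I^{(r)}$, yielding at once
$$I^{(rN)} \;\subseteq\; M^{r(N-1)}\, I^{(r)}.$$
This already proves Conjecture~\ref{fatptconj} whenever $I^{(r)} = I^r$ --- in particular for $r = 1$ (a candidate for the ``very special case'' announced in Section~\ref{section: optconj}) and for complete intersections.

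The main obstacle is the residual gap between $I^{(r)}$ and $I^r$. Since $I^{(r)}/I^r$ is supported at $M$, exactly where $M^{r(N-1)}$ concentrates our element, the two cannot be disentangled by formal ideal manipulation. A natural inductive attempt --- supposing the conjecture for $r-1$ and applying Euler with $k = N$ --- gives partial derivatives of order $N$ in $I^{((r-1)N)} \subseteq M^{(r-1)(N-1)} I^{r-1}$, so that $F \in M^{r(N-1)+1}\, I^{r-1}$. This inclusion is strictly weaker than the conjecture (one extra $M$, one missing $I$), and comparing the two in either direction requires exactly the missing containment one is trying to establish, so the induction does not close.

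A more promising route is to seek a sharper Ein-Lazarsfeld-Smith/Hochster-Huneke type statement that incorporates the $M^{r(N-1)}$ factor from the start, for instance via a multiplier ideal refinement on a log resolution of the fat point scheme, with the extra vanishing coming from the exceptional divisors over each $p_i$. Alternatively, one can blow up the fat point scheme and reformulate the conjecture as a statement about Seshadri-type constants and nef cones on $\mathrm{Bl}(\pr N)$; this likely underlies the treatment of general points in $\pr 2$ in Section~3 and could extend to general points in higher dimension via Nagata-type asymptotics. The genuinely hard step --- and where I would expect a new idea to be needed --- is upgrading the symbolic power $I^{(r)}$ produced by Euler to the \emph{ordinary} power $I^r$ for non-generic point configurations in dimension at least three.
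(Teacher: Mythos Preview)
The statement is a \emph{conjecture}; the paper does not prove it in general, so there is no ``paper's own proof'' to compare against. What the paper does is establish special cases (Proposition~\ref{fatptprop1}, Proposition~\ref{fatptprop}, Corollary~\ref{starcor1}, Proposition~\ref{genptsprop}), all via a degree-counting mechanism rather than the Euler identity.

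Your Euler argument is correct in characteristic zero and yields the genuine containment $I^{(rN)}\subseteq M^{r(N-1)}I^{(r)}$. The paper uses this idea only in the special shape $I^{(2m)}\subseteq M^m I^{(m)}$ (see the last Remark of Section~\ref{sect4.1}), so your version is a slight generalization of what appears there. You are also right that this settles the conjecture when $I^{(r)}=I^r$, and that the residual obstruction is precisely the gap between symbolic and ordinary $r$th powers. Two small corrections: first, the ``very special case'' announced at the end of the Introduction is not $r=1$ but Proposition~\ref{fatptprop1}, whose hypothesis is a degree bound on generators together with a lower bound on $\alpha(I^{(Nr)})$; second, for complete intersections the paper's route is more direct and characteristic-free, namely $I^{(rN)}=I^{rN}=I^{r(N-1)}I^r\subseteq M^{r(N-1)}I^r$, with no Euler identity needed.

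The paper's special-case machinery is quite different from yours. Proposition~\ref{fatptprop1} says: if $J$ is generated in degrees $\le s$ and $\alpha(I^{(Nr)})\ge rms+rm(N-1)$, then since $J^{(Nmr)}\subseteq J^{mr}$ by \cite{refELS,refHH} and $J^{mr}$ is generated in degree $\le mrs$, one has $(J^{mr})_t=(M^{mr(N-1)}J^{mr})_t$ for $t\ge mrs+mr(N-1)$, which covers every degree in which $I^{(Nr)}$ is nonzero. This degree argument is what feeds into all the concrete cases (stars, general points in $\pr2$). Your multiplier-ideal and blow-up suggestions are in the right spirit but are not what the paper pursues; the $\pr2$ general-points case (Proposition~\ref{genptsprop}) is handled by explicit knowledge of $\gamma(I)$ and ${\rm reg}(I)$ for small $n$ and the Nagata-type bound $\gamma(I)\ge\sqrt{n-1}$ for large $n$, plugged into the degree criterion above, not by Seshadri/nef-cone analysis.
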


In fact, as far we know, there is no reason not to raise this question for arbitrary
homogeneous ideals in a polynomial ring, or even for arbitrary ideals in a regular local
ring. In this paper, all our main arguments are for the case of points, so we have chosen
not to make the general conjecture unless more evidence can be found that supports it. 

In fact, Proposition \ref{fatptprop} shows that the conjecture is true for $N=2$ for fat point ideals arising as
symbolic powers of radical ideals generated in a single degree.
Examples of point sets whose ideals are generated in a single 
degree include star configurations (see Definition \ref{starDef})
and any set of $\binom{s}{2}$ general points in $\pr2$.

Most of our progress on this conjecture relates to the minimal degrees of elements in symbolic powers. We need
the following definition. 

\begin{defn}  Let $R$ be a polynomial ring over a field, and let $J$ be a homogeneous
ideal. We set $\alpha(J)$ equal to the smallest integer $l$ such that $J_l\ne 0$, and set $\beta(J)$ equal to the smallest integer $n$ such that
$J_n$ contains a regular sequence of length two.
\end{defn}

We first note that the conjecture holds provided we know some information about the symbolic powers.

\begin{prop}\label{fatptprop1}
Let $J=\cap_i I(p_i)\subset R = K[\pr N]$ and let $I=J^{(m)}\subset R$ be a fat points ideal. 
If for some $s$, 
$J$ is generated by a set of homogeneous elements each having degree at most $s$, 
and if $\alpha(I^{(Nr)})\ge rms+rm(N-1)$, then $I^{(Nr)}\subseteq M^{r(N-1)}I^r$.
\end{prop}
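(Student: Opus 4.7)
The plan is to combine the Ein-Lazarsfeld-Smith / Hochster-Huneke containment for the radical ideal $J$ with the degree-$s$ generation of $J$, so as to write any form $F \in I^{(Nr)}$ as a sum of $R$-multiples of products of $mr$ generators of $J$, and then to use the $\alpha$-hypothesis to force the $R$-coefficients to have large degree and hence to lie in a high power of $M$. Since any homogeneous form of degree at least $k$ lies automatically in $M^k$, this should yield the desired containment in $M^{r(N-1)}I^r$.

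First I would observe that $I^{(Nr)} = J^{(mNr)}$, by unwinding the definition of the symbolic power of $I = J^{(m)}$ and using that the associated primes of $I$ are precisely the $I(p_i)$. Then by \cite{refELS} and \cite{refHH}, applied with the height of each $I(p_i)$ equal to $N$, one has $J^{(mNr)} \subseteq J^{mr}$, so any $F \in I^{(Nr)}$ lies in $J^{mr}$. Writing $F$ as a homogeneous $R$-linear combination $F = \sum_\alpha c_\alpha g_\alpha$, where each $g_\alpha$ is a product of $mr$ generators of $J$ (each of degree at most $s$, so $\deg g_\alpha \le mrs$) and each $c_\alpha \in R$ is homogeneous of the compatible degree, the hypothesis gives $\deg c_\alpha = \deg F - \deg g_\alpha \ge (rms + rm(N-1)) - mrs = rm(N-1) \ge r(N-1)$. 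Hence $c_\alpha \in M^{r(N-1)}$. On the other hand, using the trivial inclusion $J^m \subseteq J^{(m)} = I$, we have $g_\alpha \in J^{mr} = (J^m)^r \subseteq I^r$. Combining, $F \in M^{r(N-1)} I^r$.

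There is really no substantive obstacle here: the argument is a degree-counting exercise once one routes $F$ through ELS/HH and invokes the elementary fact that high-degree forms sit in high powers of the irrelevant ideal. The content of the statement is essentially that the $\alpha$-bound has been calibrated to match the worst-case degree $mrs$ of a product of $mr$ generators of $J$, leaving exactly enough extra degrees in each coefficient to land in $M^{r(N-1)}$. The one small point to be careful about is taking each $c_\alpha$ homogeneous of the correct degree in the decomposition of $F$, which is legitimate because $F$ itself is homogeneous; and we may discard the corner case $m=0$ since then $I = R$ and the conclusion is trivial.
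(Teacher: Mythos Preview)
Your proof is correct and follows essentially the same route as the paper's: both invoke the Ein--Lazarsfeld--Smith/Hochster--Huneke containment $J^{(Nmr)}\subseteq J^{mr}$, use that $J^{mr}$ is generated in degree at most $mrs$, and then let the hypothesis on $\alpha(I^{(Nr)})$ force the remaining degrees into a high power of $M$. The only cosmetic difference is that the paper phrases the argument in terms of graded pieces (showing $(J^{mr})_t=(M^{mr(N-1)}J^{mr})_t$ for $t\ge mrs+mr(N-1)$) and passes through the slightly sharper intermediate containment $I^{(Nr)}\subseteq M^{mr(N-1)}J^{mr}$ before weakening to $M^{r(N-1)}I^r$, whereas you work element-by-element and weaken the exponent on $M$ one step earlier; the content is the same.
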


\begin{proof}  We use that $J^{(Nmr)}\subseteq J^{mr}$ by \cite{refELS} (also see \cite{refHH}).
Since $J$ is generated in degree $s$ and less, so $J^{mr}$ is generated in degree
at most $mrs$, and hence $J^{mr}_t=(M^{mr(N-1)}J^{mr})_t$ for $t\ge mrs+mr(N-1)$.
Of course, $0=J^{(Nmr)}_t\subseteq(M^{mr(N-1)}J^{mr})_t$ for $t<\alpha(J^{(Nmr)})$,
while for $t\ge \alpha(J^{(Nmr)})=\alpha(I^{(Nr)})\ge rms+rm(N-1)$
we have $(J^{(Nmr)})_t\subseteq (J^{mr})_t=(M^{mr(N-1)}J^{mr})_t$
and so $I^{(Nr)}=J^{(Nmr)}\subseteq M^{mr(N-1)}J^{mr}\subseteq M^{r(N-1)}I^r$.
\end{proof}

\renewcommand{\thethm}{\thesection.\arabic{thm}}
\setcounter{thm}{0}

\section{Points in projective space}

As discussed in the introduction, if $I$ is the radical ideal of a finite set of points in $\pr N$,
Waldschmidt and Skoda \cite{refW, refSk} showed that
$\alpha(I^{(m)})/m\ge \alpha(I)/N$ for every $m>0$, using
complex analysis, but it also follows 
for any homogeneous ideal $I\subseteq R = K[\pr N]$ using the result
$I^{(Nm)}\subseteq I^m$ of \cite{refELS}, \cite{refHH}. Among other things,
the algebraic argument suggests that 
Conjecture~\ref{fatptconj} is closely related to the study of $\alpha(I^{(m)})$.

When $N=2$, Chudnovsky improved the bound of Waldschmidt and Skoda \cite{refW, refSk}.
Since only a sketch of Chudnovksy's proof is given in \cite{refCh}, we give both the 
statement and proof here. We will give a small improvement of this result in
Proposition \ref{Chudgen}.

\begin{prop}[Chudnovsky]\label{Chudprop}
Let $p_1,\ldots,p_n\in\pr2$ be distinct points.
Let $I=\cap_i I(p_i)\subset K[\pr2]$.
Then 
$$\frac{\alpha(I^{(m)})}{m}\ge \frac{\alpha(I)+1}{2}$$
for all $m>0$. 
\end{prop}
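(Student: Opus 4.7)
The plan is to derive $\alpha(I^{(m)})\ge m(\alpha(I)+1)/2$ by combining a Bezout intersection estimate in $\pr 2$ with a Hilbert-function lower bound on the number of points. Throughout, write $d=\alpha(I^{(m)})$ and $\alpha=\alpha(I)$.

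The first ingredient is the numerical inequality $n\ge\binom{\alpha+1}{2}$. By definition of $\alpha=\alpha(I)$, no nonzero form of degree $\alpha-1$ lies in $I$, so the evaluation map $R_{\alpha-1}\to K^n$, $f\mapsto(f(p_1),\ldots,f(p_n))$, has trivial kernel. Since $\dim_K R_{\alpha-1}=\binom{\alpha+1}{2}$, injectivity forces $n\ge\binom{\alpha+1}{2}$.

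The second ingredient is a Bezout estimate. Pick $F\in I^{(m)}$ of degree $d$ and a generic form $G\in I$ of degree $\alpha$, and suppose for now that $F$ and $G$ have no common component. Then Bezout in $\pr 2$ gives
\[
d\alpha=\sum_{q}i(F,G;q)\ge\sum_{i=1}^{n}i(F,G;p_i)\ge\sum_{i=1}^{n}m\cdot\mathrm{mult}_{p_i}(G)\ge mn,
\]
since $F$ vanishes to order $\ge m$ and $G$ to order $\ge 1$ at each $p_i$. Combining with $n\ge\binom{\alpha+1}{2}$ yields
\[
d\;\ge\;\frac{mn}{\alpha}\;\ge\;\frac{m\alpha(\alpha+1)}{2\alpha}\;=\;\frac{m(\alpha+1)}{2},
\]
as required.

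The main obstacle is justifying the coprimality of $F$ and $G$. When $\dim_K I_\alpha\ge 2$, a generic $G\in I_\alpha$ is coprime to any fixed $F$, and the Bezout step goes through directly. When $\dim_K I_\alpha=1$, the unique (up to scalar) minimal form $G$ may divide $F$; in that case, writing $F=GF_1$ one sees (using that $G$ is smooth at each $p_i$, which can be checked in the relevant cases) that $F_1\in I^{(m-1)}$, and induction on $m$ closes the argument: the base case $m=1$ is $\alpha\ge(\alpha+1)/2$, while the inductive step gives $d\ge\alpha+\alpha(I^{(m-1)})\ge\alpha+(m-1)(\alpha+1)/2=m(\alpha+1)/2+(\alpha-1)/2\ge m(\alpha+1)/2$ since $\alpha\ge 1$. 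Intermediate situations in which $F$ and $G$ share a proper common factor $H$ that vanishes at only some of the $p_i$ are handled by peeling off $H$ and recursing on the smaller subconfiguration of points where $H$ does not vanish.
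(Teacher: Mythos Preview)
Your overall strategy---combine the Hilbert-function bound $n\ge\binom{\alpha+1}{2}$ with a B\'ezout estimate $d\alpha\ge mn$---is the same as the paper's, and the arithmetic is fine. The gap is in the coprimality step, which you do not actually establish.

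Your claim that $\dim_K I_\alpha\ge 2$ forces a generic $G\in I_\alpha$ to be coprime to $F$ is false: the linear system $I_\alpha$ can have a fixed component even when it is not one-dimensional. For a concrete example, take three points on a line $L$ and a fourth point $p_4\not\in L$. Then $\alpha(I)=2$, every conic through the four points is $L\cdot M$ with $M$ a line through $p_4$, and $\dim_K I_2=2$; yet every element of $I_2$ is divisible by $L$, so no $G\in I_2$ is coprime to $F=L\cdot M_0$. Your remaining cases are not proofs either: in the $\dim_K I_\alpha=1$ case you assert that $G$ is smooth at each $p_i$ ``in the relevant cases'' without argument, and the ``peel off $H$ and recurse'' paragraph never states an inductive hypothesis or explains how the value of $\alpha$ behaves under passage to a subconfiguration.

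The paper repairs exactly this point with one clean move: replace the full set of points by a \emph{minimal} subset $\{q_1,\ldots,q_t\}$ for which $\alpha(J)=\alpha(I)$, where $J=\bigcap_i I(q_i)$. Minimality forces the $q_i$ to impose independent conditions in degree $\alpha-1$, so $t=\binom{\alpha+1}{2}$ and $\alpha(J)=\mathrm{reg}(J)$; hence $J$ is generated in degree $\alpha$, the base locus of $J_\alpha$ is exactly $\{q_1,\ldots,q_t\}$, and $J_\alpha$ is fixed-component free. Now a generic $B\in J_\alpha$ really is coprime to $F$, and B\'ezout gives $d\alpha\ge mt=m\binom{\alpha+1}{2}$, which is the desired bound. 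Passing to this minimal subset is precisely what eliminates the case analysis you were attempting.
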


\begin{proof} Let $b=\alpha(I)$. Choose distinct points $q_1,\dots,q_t\in\{p_1,\ldots,p_n\}$
with $t$ as small as possible such that $\alpha(J)=b$,
where $J=\cap_i I(q_i)$. By minimality, the points $q_i$ impose independent conditions in degree
$b-1$, hence $t=\binom{b+1}{2}$ (since this is the dimension of the space of all forms of degree $b-1$)
and $\alpha(J)={\rm reg}(J)$.
Thus $J$ is generated in degree $b$ and hence the only base points of
$J_b$ are the points $q_i$; in particular, $J_b$ is fixed component free
(i.e., there is no nonconstant common factor for the homogeneous elements of $J$ of degree $b$).
Now let $A$ be a nonzero form in $I^{(m)}_a$, where
$a=\alpha(I^{(m)})$. Since $J_b$ is 
fixed component free, we can choose an element $B\in J_b$
with no factor in common with $A$. By B\'ezout's Theorem,
it follows that $ab=\deg(A)\deg(B)\ge mt=m\binom{b+1}{2}$, and hence
that $\alpha(I^{(m)})=a\ge (b+1)/2=(\alpha(I)+1)/2$.
\end{proof}

\eatit{
\begin{cor}\label{Chudcor}
Let $p_1,\ldots,p_n\in\pr2$ be distinct points.
Let $I=\cap_i I(p_i)^m\subset K[\pr2]$.
Then $\alpha(I^{(r)})/r\ge (\alpha(I)+m)/2$
holds for all $m>0$. 
\end{cor}

\begin{proof} Let $J=\cap_i I(p_i)$ so $I=J^{(m)}$ and $I^{(r)}=J^{(rm)}$.
Then $\alpha(I^{(r)})=\alpha(J^{(rm)})\ge rm(\alpha(J)+1)/2=r(\alpha(J^m)+m)/2
\ge r(\alpha(J^{(m)})+m)/2$ by Proposition \ref{Chudprop}. 
\end{proof}
}

Our original motivation for Conjecture~\ref{fatptconj} was exactly this result of
Chudnovsky. Just as the containment result $I^{(rN)}\subseteq I^r$ of \cite{refHH}
implies the bound $\alpha(I^{(m)})/m\geq \alpha(I)/N$ of
Waldschmidt and Skoda, we looked for a new containment which in a similar way would
imply the bound in Proposition \ref{Chudprop}. In addition to the result of
Proposition \ref{Chudprop}, Chudnovsky \cite{refCh} has conjectured for $N>2$  
that $\alpha(I^{(m)})/m\ge (\alpha(I)+N-1)/N$ (actually his conjecture was stated
for $K={\bf C}$ for points in affine $N$-space). We now show that Conjecture
\ref{fatptconj} implies not only Proposition \ref{Chudprop} but also $\alpha(I^{(m)})/m\ge (\alpha(I)+N-1)/N$.

\begin{lem}\label{ContainmentImpliesChud}
Let $p_1,\ldots,p_n\in\pr N$ be distinct points, and let $I=\cap_i I(p_i)\subset K[\pr N]$.
If $I^{(Nr)}\subseteq M^{r(N-1)}I^r$ holds for all $r\geq 1$, then
$\alpha(I^{(m)})/m\ge (\alpha(I)+N-1)/N$ holds for all $m\geq1$.
\end{lem}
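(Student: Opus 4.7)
The plan is to extract a degree estimate directly from the hypothesized containment, handle the case $m = Nr$ first, and then bootstrap to arbitrary $m$ using the trivial containment $(I^{(m)})^N \subseteq I^{(Nm)}$.

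The first step is to observe that $\alpha$ is additive on products: for any nonzero homogeneous ideals $A, B$ one has $\alpha(AB) = \alpha(A)+\alpha(B)$, since a nonzero form of minimal degree in $AB$ factors (after passing to generators) as a product of forms in $A$ and in $B$. Applied to $M^{r(N-1)} I^r$, this gives $\alpha(M^{r(N-1)} I^r) = r(N-1) + r\alpha(I)$. The hypothesis $I^{(Nr)} \subseteq M^{r(N-1)} I^r$ then immediately yields
\[
\alpha(I^{(Nr)}) \;\ge\; r(N-1) + r\alpha(I) \;=\; r(\alpha(I)+N-1),
\]
so $\alpha(I^{(Nr)})/(Nr) \ge (\alpha(I)+N-1)/N$ for all $r \ge 1$. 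This settles Chudnovsky's bound along the arithmetic progression of multiples of $N$.

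The second step handles an arbitrary $m$. From the elementary inclusion $(I^{(m)})^N \subseteq I^{(Nm)}$, combined with additivity of $\alpha$ on products, we get
\[
N\,\alpha(I^{(m)}) \;=\; \alpha\bigl((I^{(m)})^N\bigr) \;\ge\; \alpha(I^{(Nm)}).
\]
Now apply the previous step with $r=m$ to the right-hand side: $\alpha(I^{(Nm)}) \ge m(\alpha(I)+N-1)$. Dividing by $Nm$ gives exactly $\alpha(I^{(m)})/m \ge (\alpha(I)+N-1)/N$, as desired.

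There is really no serious obstacle here; the statement is a clean formal consequence of the containment, and the only thing one has to be careful about is the passage from $m$ a multiple of $N$ to a general $m$, which is precisely what $(I^{(m)})^N \subseteq I^{(Nm)}$ is designed to do. The same two-step strategy — use a containment to bound $\alpha$ on a subsequence, then interpolate via $(I^{(m)})^N \subseteq I^{(Nm)}$ — is exactly how the Waldschmidt–Skoda bound is recovered from $I^{(Nm)} \subseteq I^m$ in the introduction, and our argument is the natural refinement of that template to the extra factor $M^{r(N-1)}$.
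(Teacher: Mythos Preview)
Your proof is correct. Both arguments begin identically, extracting $\alpha(I^{(Nr)})\ge r(\alpha(I)+N-1)$ from the containment. The difference is in the passage to arbitrary $m$: the paper divides by $Nr$, lets $r\to\infty$ to obtain $\gamma(I)\ge(\alpha(I)+N-1)/N$, and then invokes the known inequality $\alpha(I^{(m)})/m\ge\gamma(I)$ (recalled just before the proof). You instead bypass the Waldschmidt constant entirely by applying $(I^{(m)})^N\subseteq I^{(Nm)}$ directly, which is exactly the template the paper uses in the Introduction for the weaker Waldschmidt--Skoda bound. Your route is marginally more self-contained, since it does not rely on the existence of $\gamma(I)$ or on its being the infimum of the sequence $\alpha(I^{(m)})/m$; the paper's route, on the other hand, yields the slightly stronger intermediate statement $\gamma(I)\ge(\alpha(I)+N-1)/N$ as a byproduct.
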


We pause to recall a numerical quantity introduced by Waldschmidt \cite{refW}
for sets of points, but which extends to homogeneous ideals $0\neq I\subset K[\pr N]$.
Define
$$\gamma(I)=\lim_{m\to\infty}\frac{\alpha(I^{(m)})}{m}.$$
The limit exists and satisfies $\gamma(I)\leq\frac{\alpha(I^{(m)})}{m}$ for all $m\geq 1$;
see \cite[Lemma 2.3.1]{refBH} and its proof.

We now prove Lemma \ref{ContainmentImpliesChud}.

\begin{proof}
Since $I^{(Nr)}\subseteq M^{r(N-1)}I^r$, we have 
$\alpha(I^{(Nr)})\geq \alpha(M^{r(N-1)}I^r)=r\alpha(I)+r(N-1)$.
Now divide by $rN$ and take limits as $r\to\infty$ to get
$\alpha(I^{(m)})/m\geq\gamma(I)=(\alpha(I)+N-1)/N$.
\end{proof}

Conversely, we can also use Proposition \ref{Chudprop} to 
prove certain cases of Conjecture \ref{fatptconj}.

\begin{prop}\label{fatptprop}
Let $J=\cap_i I(p_i)\subset K[\pr N]$ and let $I=J^{(m)}\subset R$ be a fat points ideal.
If $N=2$ and if $J$ is generated in degree $\alpha(J)$,
then $I^{(Nr)}\subseteq M^{r(N-1)}I^r$ for all $r$.
\end{prop}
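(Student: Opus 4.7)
The plan is to deduce the statement directly by combining Proposition \ref{fatptprop1} with Chudnovsky's bound (Proposition \ref{Chudprop}). Set $s=\alpha(J)$; by hypothesis $J$ is generated in degree $s$, so Proposition \ref{fatptprop1} applies provided we verify the numerical condition
$$\alpha(I^{(Nr)})\ge rms+rm(N-1)$$
with $N=2$, i.e., $\alpha(I^{(2r)})\ge rm(s+1)$.

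First I would rewrite $I^{(2r)}$ in terms of $J$: since $I=J^{(m)}$, standard symbolic power manipulations for radical ideals of points give $I^{(2r)}=J^{(2rm)}$. Then Proposition \ref{Chudprop} applied to $J$ yields
$$\alpha(J^{(2rm)})\ge 2rm\cdot\frac{\alpha(J)+1}{2}=rm(s+1),$$
which is exactly the required lower bound on $\alpha(I^{(2r)})$.

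With this numerical hypothesis in hand, Proposition \ref{fatptprop1} immediately delivers $I^{(2r)}\subseteq M^{r}I^{r}$, which is the desired containment $I^{(Nr)}\subseteq M^{r(N-1)}I^{r}$ for $N=2$.

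There is essentially no obstacle here beyond assembling the two earlier results; the only small point to be careful about is the identity $I^{(2r)}=J^{(2rm)}$, which uses that $J$ is a radical ideal of points (so that symbolic powers satisfy $(J^{(m)})^{(r)}=J^{(mr)}$ because both sides equal $\cap_i I(p_i)^{mr}$). Note also that the proof relies crucially on $N=2$: beyond this case, Chudnovsky's bound $\alpha(I^{(m)})/m\ge(\alpha(I)+1)/2$ is replaced by the only conjectural bound $(\alpha(I)+N-1)/N$, so the hypothesis of Proposition \ref{fatptprop1} cannot presently be verified in the same unconditional way in higher dimension --- which is precisely the link between Conjecture \ref{fatptconj} and the Chudnovsky-type bound recorded in Lemma \ref{ContainmentImpliesChud}.
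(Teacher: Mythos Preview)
Your proof is correct and follows exactly the same approach as the paper: set $s=\alpha(J)$, use Chudnovsky's bound (Proposition \ref{Chudprop}) to verify $\alpha(J^{(2rm)})\ge rm(s+1)$, then invoke Proposition \ref{fatptprop1}. Your version is more explicit about the identification $I^{(2r)}=J^{(2rm)}$ and about the role of the restriction $N=2$, but the argument is the same.
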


\begin{proof}
Let $s=\alpha(J)$; then  
$\alpha(J^{(Nmr)})\ge mrs+mr(N-1)$ by Proposition \ref{Chudprop}. 
Now apply Proposition \ref{fatptprop1}.
\end{proof}

\begin{rem}\label{ChudConj}\rm 
Given $I=\cap_i I(p_i)$ for any distinct points $p_1,\ldots,p_n\in\pr N$ for $N>2$,
if Chudnovsky's conjecture \cite{refCh} $\alpha(I^{(m)})/m\ge (\alpha(I)+N-1)/N$ holds, 
then the proof of Proposition \ref{fatptprop} would work 
for any $N$, not just $N=2$.
\end{rem}

We next refine Proposition \ref{Chudprop} by bringing into play 
$\beta(I^{(m)})$.

\begin{prop}\label{Chudgen} Let Let $p_1,\ldots,p_n\in\pr2$ be distinct points.
Let $I=\cap_i I(p_i)\subset K[\pr2] = R$. Set $\alpha_m = \alpha(I^{(m)})$,
and $\beta_m = \beta(I^{(m)})$. Then we have:
\begin{enumerate}
\item[(i)] $\alpha_m\beta_m \geq m^2n$, and
\vskip\baselineskip
\item[(ii)] $\displaystyle\frac{\alpha_m}{m}\ge \Big(\frac{\alpha_1+1}{2}\Big)\Big(\frac{m\alpha_1}{\beta_m}\Big).$
\end{enumerate}
Moreover, if $\alpha_m\beta_m  =  m^2n$, then  
$(I^{(m)})^k = I^{(mk)}$ for all $k\geq 1$. 
\end{prop}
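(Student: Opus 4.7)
\textbf{For (i)}, the plan is to apply B\'ezout's theorem to a coprime pair in $I^{(m)}$. I would pick any nonzero $A \in I^{(m)}_{\alpha_m}$ and, by definition of $\beta_m$, forms $f, g \in I^{(m)}_{\beta_m}$ with $\gcd(f, g) = 1$. Since $\gcd(f, g) = 1$, no irreducible factor of $A$ can divide both $f$ and $g$, so for generic $(\lambda, \mu) \in K^2$ (extending $K$ if it is finite) the form $B := \lambda f + \mu g \in I^{(m)}_{\beta_m}$ is coprime to $A$. Both curves $V(A), V(B)$ vanish to order at least $m$ at each $p_i$, so $i(A, B; p_i) \geq \mathrm{mult}_{p_i}(A)\cdot\mathrm{mult}_{p_i}(B) \geq m^2$, and B\'ezout yields $\alpha_m\beta_m = \sum_p i(A, B; p) \geq nm^2$.

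\textbf{For (ii)}, the plan is to combine (i) with the bound $n \geq \binom{\alpha_1 + 1}{2}$, which follows from counting conditions: the minimality $\alpha(I) = \alpha_1$ forces $I_{\alpha_1 - 1} = 0$, so the evaluation map $R_{\alpha_1 - 1} \to K^n$ sending $F \mapsto (F(p_1),\dots,F(p_n))$ is injective, giving $\binom{\alpha_1 + 1}{2} = \dim_K R_{\alpha_1 - 1} \leq n$. Substituting into (i) yields $\alpha_m\beta_m \geq m^2\alpha_1(\alpha_1 + 1)/2$, which rearranges to (ii).

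\textbf{For the Moreover}, suppose $\alpha_m\beta_m = m^2 n$. The first step is to extract the structural consequences of equality in (i): every step of the B\'ezout chain must now be tight, which forces the multiplicities of $A$ and $B$ at each $p_i$ to be exactly $m$, the tangent cones (degree-$m$ leading forms) of $A$ and $B$ at $p_i$ to have no common factor in the local polynomial ring, and $V(A) \cap V(B) = \{p_1, \dots, p_n\}$ set-theoretically. Next, I would show the hypothesis propagates to all $k$: the coprime pair $A^k, B^k \in (I^{(m)})^k \subseteq I^{(mk)}$ has degrees $k\alpha_m, k\beta_m$, giving $\alpha_{mk} \leq k\alpha_m$ and $\beta_{mk} \leq k\beta_m$; combined with (i) applied to $I^{(mk)}$ ($\alpha_{mk}\beta_{mk} \geq (mk)^2 n = k^2\alpha_m\beta_m$), this forces equality $\alpha_{mk} = k\alpha_m$ and $\beta_{mk} = k\beta_m$.

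To upgrade these numerical equalities to the ideal statement $(I^{(m)})^k = I^{(mk)}$: the containment $\supseteq$ is automatic, and since both sides localize to $\mathfrak{m}_{p_i}^{mk}$ at each $I(p_i)$, the quotient $I^{(mk)}/(I^{(m)})^k$ is supported only on the irrelevant ideal $M$, so it suffices to show $(I^{(m)})^k$ has no embedded associated prime at $M$. My plan here is geometric, on the blowup $X = \mathrm{Bl}_{p_1, \dots, p_n}(\pr 2)$ with $H$ the pullback of a hyperplane and $E_i$ the exceptional divisors: the classes $L_1 = \alpha_m H - m\sum E_i$ and $L_2 = \beta_m H - m\sum E_i$ satisfy $L_1 \cdot L_2 = \alpha_m\beta_m - m^2 n = 0$, and the transversality above makes the strict transforms of $A$ and $B$ disjoint on $X$. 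When $\alpha_m = \beta_m$ (so $L := L_1 = L_2$ with $L^2 = 0$), the pencil $\langle A, B\rangle$ is base-point-free and defines a fibration $\phi : X \to \pr 1$ with $\phi_*\OO_X = \OO_{\pr 1}$, whence $H^0(X, kL) = \mathrm{Sym}^k H^0(X, L)$ translates to surjectivity of $\mathrm{Sym}^k I^{(m)} \to I^{(mk)}$ in minimum degree, extending to all degrees by multiplication by $R$. \textbf{The main obstacle} is exactly this last step: making the fibration argument rigorous (particularly the case $\alpha_m < \beta_m$, where $A, B$ live in distinct line bundles and the twisting requires care), and bootstrapping minimum-degree surjectivity to the full ideal equality in every degree; an algebraic alternative would exploit that $R/(A, B)^k$ is Cohen-Macaulay (as $(A, B)$ is a complete intersection) and attempt to transfer this property to $R/(I^{(m)})^k$ via the tight intersection data.
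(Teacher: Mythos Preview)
Your arguments for (i) and (ii) are correct. Part (i) is essentially the paper's B\'ezout computation. For (ii) you are slightly more direct than the paper: you use the dimension count $n\ge\binom{\alpha_1+1}{2}$ and feed it straight into (i), whereas the paper passes through an auxiliary ideal $J$ on a minimal subset of $t=\binom{\alpha_1+1}{2}$ points and applies (i) to $J$. Both yield (ii); the paper's detour is there because it actually proves the sharper inequality $\alpha_m\beta_m\ge m^2\binom{\alpha_1+1}{2}\cdot\frac{\alpha_1 m}{\beta(J^{(m)})}$ exploited later in Remark~\ref{Chudimprove}.

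For the ``Moreover'' there is a genuine gap. Your reduction is sound: since $(I^{(m)})^k$ and $I^{(mk)}$ agree after localizing at each $P_i$, the quotient is $M$-torsion, so it suffices that $M\notin\operatorname{Ass}(R/(I^{(m)})^k)$. But your fibration sketch only treats the case $\alpha_m=\beta_m$, and even there the passage from ``$H^0(X,kL)=\operatorname{Sym}^kH^0(X,L)$'' to equality of the ideals in every degree is not justified (knowing the minimum-degree piece does not by itself rule out an embedded $M$-primary component). The algebraic alternative you mention is a hint in the right direction but is not carried out.

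The paper's route is different and worth knowing. With $f,g\in I^{(m)}$ the regular sequence of degrees $\alpha_m,\beta_m$, it proves the stronger equality $(f,g)^{k-1}I^{(m)}=I^{(mk)}$. First, an induction on $k$ using the exact sequence
\[
0\to (f,g)^{k-1}/(f,g)^{k-1}I^{(m)}\to R/(f,g)^{k-1}I^{(m)}\to R/(f,g)^{k-1}\to 0
\]
and the freeness of $(f,g)^{k-1}/(f,g)^k$ over $R/(f,g)$ shows $\operatorname{Ass}(R/(f,g)^{k-1}I^{(m)})=\{P_1,\dots,P_n\}$; equality in B\'ezout is used here to guarantee that the $P_i$ are the \emph{only} minimal primes of $(f,g)$. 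It then remains to check the equality locally at each $P_i$. Your observation that the tangent cones of $f,g$ at $p_i$ are coprime is exactly the statement that $\ell(R_{P_i}/(f,g)_{P_i})=m^2=e((P_i^m)_{P_i})$, which by Rees's theorem makes $(f,g)_{P_i}$ a minimal reduction of the integrally closed ideal $(P_i^m)_{P_i}$. The missing ingredient is then the Lipman--Teissier theorem (Brian\c con--Skoda in a two-dimensional regular local ring): for such a reduction $J$ of an integrally closed $\mathfrak a$ one has $J^{k-1}\mathfrak a=\mathfrak a^k$, giving $((f,g)^{k-1}I^{(m)})_{P_i}=(P_i^{mk})_{P_i}=(I^{(mk)})_{P_i}$.
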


\begin{proof} We first prove that $\alpha_m\beta_m \geq m^2n$, which is basically by B\'ezout's theorem. Let $l$ be a general linear form. Choose
$f$ of degree $\alpha_m$ and $g$ of degree $\beta_m$ in $I^{(m)}$ which form a regular sequence. Then
$\alpha_m\beta_m  = \ell(R/(f,g,l)) = \sum_{P} \ell(_P/(f,g)_P)\ell(R/(P,l))$, where the sum is over
all prime ideals minimal over the ideal $(f,g)$. Since such $P$ include all the ideals corresponding to
the points $p_1,\ldots,p_n$, we can restrict the sum to $P_1,\ldots,P_n$, where $P_i = I(p_i)$. In this case,
$\ell((R/(P_i,l)) = 1$ for each $i$, and $\ell(R_{P_i}/(f,g)_{P_i}) \geq m^2$ since $R_{P_i}$ is a regular local
ring and by assumption, the images of $f$ and $g$ are in $(P_i^m)_{P_i}$\footnote{In general, if $(R,M)$ is a regular local ring
and if $x_1,\ldots,x_d\in M^n$ form a maximal regular sequence, then $\ell(R/(x_1,\ldots,x_d))\geq n^d$. This can be seen, for
example, as follows. If $I$ denotes the integral closure of the ideal $(x_1,\ldots,x_d)$, then $I\subset M^n$ as
the latter ideal is integrally closed. Hence the multiplicity of $I$, which is $\ell(R/(x_1,\ldots,x_d))$, is at least the
multiplicity of $M^n$, which is $n^d$.} . This gives the first inequality.

To prove the second inequality, we use the idea from the proof of Proposition~\ref{Chudprop}. As in that
proof,
 choose distinct points $q_1,\dots,q_t\in\{p_1,\ldots,p_n\}$
with $t$ as small as possible such that $\alpha(J)=\alpha(I)$,
where $J=\cap_i I(q_i)$. As above, 
$t=\binom{\alpha(I)+1}{2}$, and  
$\alpha(J)={\rm reg}(J)$.
Thus $J$ is generated in degree $\alpha(I)$. Note that $I^{(m)}\subset J^{(m)}$, and so 
$\alpha_m\beta_m\geq \alpha(J^{(m))})\beta(J^{(m)})\geq m^2\binom{\alpha(I)+1}{2}$, using the
first inequality of this theorem. This proves the second inequality. 

Finally we prove the last part. Suppose that  $\alpha_m\beta_m = m^2n$. Choose $f$ and $g$ as in the first
part of this theorem. We use the ideas of  Theorem 3.1 of \cite{Hu}, which cannot be used directly since
it deals with the local case of a prime ideal. 

To show $I^{(mk)} = (I^{(m)})^k$ for all $k\geq 1$, since $(f,g)^{k-1}I^{(m)}\subseteq(I^{(m)})^k \subseteq I^{(mk)}$, 
it's enough to show $I^{(mk)} = (f,g)^{k-1}I^{(m)}$. To prove this note that
$(f,g)^{k-1}I^{(m)}\subset I^{(mk)}$, so it suffices to prove the equality locally at each associated
prime of $(f,g)^{k-1}I^{(m)}$. We prove that the associated primes of this ideal are exactly the primes
ideals $P_i = I_{p_i}$. Clearly each of these are associated since they are minimal over the ideal.
To prove they are all the associated primes, we use induction on $k$. If $k = 0$, this is clear by the
definition of symbolic powers. For $k\geq 1$, $(f,g)^{k-1}/(f,g)^{k-1}I^{(m)}\cong (f,g)^{k-1}/(f,g)^k\otimes R/I^{(m)}$
since $(f,g)^k\subset (f,g)^{k-1}I^{(m)}$, and the tensor product is isomorphic to a free $R/I^{(m)}$-module as
$(f,g)^{k-1}/(f,g)^k$ is a free $R/(f,g)$-module because $f,g$ form a regular sequence. Hence the only associated
primes of $(f,g)^{k-1}/(f,g)^{k-1}I^{(m)}$ are $P_1,\ldots,P_n$. Now the exact sequence 
$$0\rightarrow (f,g)^{k-1}/(f,g)^{k-1}I^{(m)}\rightarrow R/(f,g)^{k-1}I^{(m)}\rightarrow R/(f,g)^{k-1}\rightarrow 0$$
shows that the associated primes of $R/(f,g)^{k-1}I^{(m)}$ are contained in the union of the associated primes of
$(f,g)^{k-1}/(f,g)^{k-1}I^{(m)}$ together with the associated primes of $R/(f,g)^{k-1}$. Since the associated primes of $R/(f,g)^{k-1}$
are exactly those of $R/(f,g)$, to finish the proof of our claim we need to prove that all the associated primes of $(f,g)$ are
$P_1,\ldots,P_n$. As this ideal is unmixed, this is equivalent to proving that the only points both $f$ and $g$ vanish at are
$P_1,\ldots,P_n$. 
Recall from above that $\alpha_m\beta_m  = \ell(R/(f,g,l)) = \sum_{P} \ell(R_P/(f,g)_P)\ell(R/(P,l))$, where the sum is over
all prime ideals minimal over the ideal $(f,g)$. Since we are assuming that $\alpha_m\beta_m = m^2n$, and since
$\sum_{P_i}\ell(R_{P_i}/(f,g)_{P_i})\ell(R/(P_i,l))\geq m^2n$, we see that the only primes minimal over $(f,g)$ are $P_1,\ldots,P_n$. 

To finish the proof, we need to prove that $(I^{(mk)})_{P_i} = ((f,g)^{k-1}I^{(m)})_{P_i}$ for every $i$. We know from the
fact equality holds that $\ell(R_{P_i}/(f,g)_{P_i}) = m^2$. However the multiplicity of $(P_i)_{P_i}^m$ is exactly $m^2$,
and since $(f,g)_{P_i}\subset (P_i)_{P_i}^m$, it follows from Rees's theorem \cite[Theorem 11.3.1]{refSH} that  $(f,g)_{P_i}$ is a minimal reduction of $(P_i^{m})_{P_i}$. 
Moreover, $(I^{(m)})_{P_i} = (P_i^{m})_{P_i}$ is integrally closed. By the result of Lipman and Teissier \cite{LT}, the result follows.
\end{proof}

\begin{rem}\label{Chudimprove}  {\rm The second inequality of Proposition~\ref{Chudgen} can be thought of as an improvement of
Proposition~\ref{Chudprop} in the case in which $\beta(I^{(m)})< \alpha(I)m$. However, the proof actually shows that
$$\frac{\alpha(I^{(m)})}{m}\ge \Big(\frac{\alpha(I)+1}{2}\Big)\Bigg(\frac{\alpha(I)m}{\beta(J^{(m)})}\Bigg),\eqno{({}^*)}$$ 
where $J$ is as in the proof. Moreover,  $\beta(J^{(m)})\leq \alpha(I)m$, since $J$
is generated in degree $\alpha(J) = \alpha(I)$, and $J^m\subset J^{(m)}$. 
Thus $({}^*)$ does represent a small improvement on the orginal result of Chudnovsky. In fact,
suppose that equality occurs in Chudnovksy's bound, so that $\frac{\alpha(I^{(m)})}{m} =
 \frac{\alpha(I)+1}{2}$. Then necessarily $\alpha(I)m = \beta(J^{(m)})$ by $({}^*)$. But
$\alpha(I^{(m)}) \geq \alpha(J^{(m)})\geq (\frac{\alpha(J)+1}{2})m$, so that equality must
hold and we obtain that $\alpha(J^{(m)})\beta(J^{(m)}) = m^2\binom{\alpha(I)+1}{2}$.
Therefore by Proposition~\ref{Chudgen}, it follows that $(J^{(m)})^k = J^{(mk)}$ for all $k\geq 1$.}
\end{rem}

\begin{rem} {\rm The last conclusion in Theorem~\ref{Chudgen} implies that the 
symbolic power algebra, $\oplus I^{(n)}$ is a Noetherian ring. (This is a 
homogeneous version of \cite[Theorem 1.3]{refSc}.)} 
\end{rem}

\begin{defn}\label{starDef}\rm 
Let $H_1,\ldots,H_s\in \pr N$ be $s\geq N$ hyperplanes such that no $N+1$ meet at a 
single point, and let $p_1,\ldots,p_n$ be
the $n=\binom{s}{N}$ points such that each point is the intersection of a subset of $N$
of the $s$ hyperplanes; following a suggestion of Geramita, we refer to such a set of points $p_i$ as 
a {\it star configuration\/} for $s$ hyperplanes in $\pr N$ (since 5 general lines
in the plane if drawn appropriately give a 5 pointed star). 
\end{defn}

We now show that Conjecture \ref{fatptconj} and the conjecture of Chudnovsky mentioned in
Remark \ref{ChudConj} both hold for star configurations. In fact, we show more.

\begin{cor}\label{starcor1}
Let $I=\cap_i I(p_i)$ where the $n=\binom{s}{N}$ points $p_i\in\pr N$ give a 
star configuration coming from $s\ge N$ hyperplanes in $\pr N$.
Then $\frac{\alpha(I^{(r)})}{r}\geq\frac{\alpha(I)+N-1}{N}$ 
and $I^{(Nr)}\subseteq M^{r(N-1)}I^r$ hold for all $r\geq1$,
with equality in the former when $r$ is a multiple of $N$.
If moreover $N=2$ and $m>0$ is an even integer, then: 
equality holds in $({}^*)$ of Remark \ref{Chudimprove};
$\alpha(I^{(m)})\beta(I^{(m)})=m^2n$; and
$\alpha(I)k=\beta(I^{(k)})$ and $(I^{(m)})^k=I^{(mk)}$
hold for all $k\geq1$.
\end{cor}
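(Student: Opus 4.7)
The plan is to pin down $\alpha(I^{(Nr)})=rs$ exactly and then deduce both displayed assertions together with the $\pr 2$ equalities from the machinery of Sections~2 and~3. Throughout, $L_1,\ldots,L_s$ denote the $s$ hyperplanes, so $\alpha(I)=s-N+1$ and $I$ is generated in this single degree.

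The upper bound $\alpha(I^{(Nr)})\le rs$ is witnessed by $L_1^r\cdots L_s^r\in I^{(Nr)}$, since at each star point $p_S=\bigcap_{i\in S}L_i$ exactly the $N$ factors $L_i^r$ with $i\in S$ vanish, each to order $r$. The matching lower bound, which I would prove in the sharper form $\alpha(I^{(m)})\ge ms/N$ for every $m\ge 0$, proceeds by induction on $N$, with $N=2$ supplied by Chudnovsky's Proposition~\ref{Chudprop}. For $N\ge 3$, I would use an inner induction on $m$: given $F\in I^{(m)}$, if some $L_i\nmid F$ then $F|_{L_i}$ is a nonzero form of degree $\deg F$ in the $m$th symbolic power of the star configuration on $L_i$ from the $s-1$ hyperplanes $L_j\cap L_i$ in $\pr{N-1}$, and the outer hypothesis gives $\deg F\ge m(s-1)/(N-1)\ge ms/N$ (using $s\ge N$). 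Otherwise every $L_i\mid F$, so setting $a=\min_i\mathrm{mult}_{L_i}(F)\ge 1$ and writing $F=(L_1\cdots L_s)^a\widetilde F$, the form $\widetilde F$ vanishes to order $\ge m-aN$ at each star point; either $m\le aN$ (in which case $\deg F\ge sa\ge ms/N$) or $\widetilde F\in I^{(m-aN)}$ and the inner hypothesis yields $\deg F\ge sa+(m-aN)s/N=ms/N$. The main obstacle is arranging this nested induction cleanly; the key device is that stripping a common factor $(L_1\cdots L_s)^a$ preserves the star configuration while decreasing $m$ by $aN$.

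Granted $\alpha(I^{(Nr)})=rs$, the first inequality holds for every $r$ because $\gamma(I)\le\alpha(I^{(r)})/r$ and $\gamma(I)=\lim_q\alpha(I^{(Nq)})/(Nq)=s/N=(\alpha(I)+N-1)/N$ by \cite[Lemma 2.3.1]{refBH}, with equality when $N\mid r$. The containment $I^{(Nr)}\subseteq M^{r(N-1)}I^r$ then follows immediately from Proposition~\ref{fatptprop1} with $J=I$ and $m=1$, since its hypothesis $\alpha(I^{(Nr)})\ge r(s-N+1)+r(N-1)=rs$ is exactly what we established.

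Finally, suppose $N=2$ and $m$ is even. Then $\alpha(I^{(m)})/m=s/2=(\alpha(I)+1)/2$ realizes the equality case of Chudnovsky's bound. Because $I$ is generated in degree $\alpha(I)=s-1$ and $\binom{\alpha(I)+1}{2}=\binom{s}{2}=n$, the subideal $J$ appearing in Remark~\ref{Chudimprove} coincides with $I$, so $({}^*)$ becomes an equality and Remark~\ref{Chudimprove} supplies $\beta(I^{(m)})=\alpha(I)m$, whence $\alpha(I^{(m)})\beta(I^{(m)})=m^2n$, and Proposition~\ref{Chudgen} then gives $(I^{(m)})^k=I^{(mk)}$ for all $k\ge 1$. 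To obtain $\beta(I^{(k)})=\alpha(I)k$ at every $k\ge 1$: the upper bound is realized by the regular sequence $(L_1\cdots L_{s-1})^k$ and $(L_s\cdot e_{s-2}(L_1,\ldots,L_{s-1}))^k$ in $I^k\subseteq I^{(k)}$, both of degree $(s-1)k$ and sharing no linear factor since no $L_i$ divides the elementary symmetric polynomial $e_{s-2}$; for the lower bound, if $F\in I^{(k)}_t$ with $t<(s-1)k$ and some $L_j\nmid F$, then $F|_{L_j}$ would be a nonzero degree-$t$ polynomial on $\pr 1$ vanishing to order $\ge k$ at each of the $s-1$ star points on $L_j$, forcing $t\ge(s-1)k$---a contradiction, so every $L_j\mid F$ and thus $L_1\cdots L_s\mid F$; hence any two elements of $I^{(k)}_t$ share this common factor and cannot form a regular sequence, giving $\beta(I^{(k)})\ge(s-1)k$.
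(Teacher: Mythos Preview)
Your argument is correct, and in two places it takes a genuinely different (and more self-contained) route than the paper's proof.

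First, for $\alpha(I^{(Nr)})=rs$ the paper simply cites \cite[Lemma~2.4.1]{refBH} and \cite[Lemma~2.4.2]{refBH}, whereas you supply a direct proof of the sharper bound $\alpha(I^{(m)})\ge ms/N$ via a nested induction: restricting to a hyperplane $L_i$ to invoke the case $N-1$, and peeling off a factor $(L_1\cdots L_s)^a$ to reduce $m$. This is a clean, elementary argument; the main thing to state explicitly is that the $N$ hyperplanes through each star point are in general position (forced by the star hypothesis), so that $(L_1\cdots L_s)^a$ has multiplicity exactly $aN$ there and the restriction $L_j\cap L_i$ again gives a star configuration in $\pr{N-1}$. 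Once granted, the induction is sound.

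Second, for the $N=2$ statements the paper proves $\beta(I^{(k)})=(s-1)k$ directly for all $k$, using B\'ezout for the lower bound and the regularity estimate $\operatorname{reg}(I^{(k)})\le k\operatorname{reg}(I)$ of \cite{refGGP} for the upper bound, and then reads off $\alpha(I^{(m)})\beta(I^{(m)})=m^2n$ and applies Proposition~\ref{Chudgen}. You instead observe that for even $m$ Chudnovsky's bound is attained and that here $J=I$ (since $n=\binom{s}{2}=\binom{\alpha(I)+1}{2}$), so Remark~\ref{Chudimprove} already yields $\beta(I^{(m)})=\alpha(I)m$, $\alpha\beta=m^2n$, and the equality in~$({}^*)$; then you handle $\beta(I^{(k)})$ for all $k$ with your own upper bound via the explicit regular sequence $(L_1\cdots L_{s-1})^k$, $(L_s\,e_{s-2}(L_1,\ldots,L_{s-1}))^k$. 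This avoids the \cite{refGGP} citation entirely. Note that your derivation of $\beta(I^{(m)})=\alpha(I)m$ for even $m$ via Remark~\ref{Chudimprove} is subsumed by your later argument for all $k$, so the paragraph invoking Remark~\ref{Chudimprove} is really only needed to certify the equality in~$({}^*)$ itself; everything else follows from your direct computation of $\beta(I^{(k)})$. Both approaches are valid; yours is more constructive, the paper's more bibliographic.
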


\begin{proof}
We have $\alpha(I^{(Nr)})=sr$ by \cite[Lemma 2.4.1]{refBH}, so
$\gamma(I)=\lim_{r\to\infty}\alpha(I^{(rN)})/(rN)=s/N$,
but we also have ${\rm reg}(J)=\alpha(I)=s-N+1$ by \cite[Lemma 2.4.2]{refBH}, 
so we have the equality $\alpha(I^{(Nr)})/(rN)=\gamma(I)=s/N=(\alpha(I)+N-1)/N$,
as claimed. Since as pointed out above $\alpha(I^{(r)})/r\geq\gamma(I)$ 
holds for all $r$, we also have $\frac{\alpha(I^{(r)})}{r}\geq\frac{\alpha(I)+N-1}{N}$.

Since $\alpha(I)={\rm reg}(I)$, we see $I$ is generated in degree $\alpha(I)=s-(N-1)$
and since $\alpha(I^{(mrN)})=mrs\ge mr(s-(N-1))+mr(N-1)$, it follows 
by Proposition \ref{fatptprop1} that $I^{(Nr)}\subseteq M^{r(N-1)}I^r$.

Now assume $N=2$. 
Then we have $\frac{\alpha(I^{(m)})}{m}\geq\frac{\alpha(I)+1}{2}$
from above, so $({}^*)$ will be an equality if
we verify that $\beta(I^{(m)})= m\alpha(I)$.
Note that each of the $s$ lines defining the star configuration
contain exactly $s-1$ of the $n=\binom{s}{2}$ points $p_i$. 
Let $L$ be the linear form defining one of these lines.
If $F\in (I^{(k)})_t$ is a form of degree $t<k(s-1)$,
then $L$ divides $F$ by B\'ezout's Theorem. Thus $k(s-1)\leq \beta(I^{(k)})$,
but $\beta(I^{(k)})\leq {\rm reg}(I^{(k)})$, and,
by \cite[Theorem 1.1]{refGGP},  ${\rm reg}(I^{(k)})\leq k{\rm reg}(I)$. 
Since ${\rm reg}(I)=s-1$, we see $\beta(I^{(k)})= k(s-1)=k\alpha(I)$,
as we wanted to show. 
Moreover, $\alpha(I^{(m)})\beta(I^{(m)})=(sm/2)m(s-1)=m^2n$,
so $(I^{(m)})^k=I^{(mk)}$ holds by Proposition \ref{Chudgen}.
\end{proof}

Of course, star configurations are very special sets of points,
but if one takes the ideal $J$ of a general set of $\binom{s}{2}$ points of $\pr 2$,
then $\alpha(J)={\rm reg}(J)$ so $J$ is generated in degree $\alpha(J)$ and Conjecture \ref{fatptconj}
holds for $I=J^{(m)}$ by Proposition \ref{fatptprop}.
More generally, we now show that Conjecture \ref{fatptconj} holds for the radical ideal $I$ 
of any set of $n$ general points of $\pr2$.

\begin{prop}\label{genptsprop}
Let $I=\cap_i I(p_i)\subset R$ for $n$ general points $p_i\in\pr2$. 
Then $I^{(2r)}\subseteq M^rI^r$ holds for all $r$. 
\end{prop}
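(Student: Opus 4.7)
The plan is to reduce the problem to Proposition~\ref{fatptprop1}, applied with $N=2$, $m=1$, and $J=I$: it suffices to exhibit $s$ such that $I$ is generated in degrees $\leq s$ and $\alpha(I^{(2r)})\geq rs+r$ for every $r\geq 1$. Write $a=\alpha(I)$. A routine Hilbert-function computation shows that for $n$ general points in $\pr 2$, the ideal $I$ is generated in degree $a$ exactly when $n\leq a(a+2)/2$, and has a minimal generator of degree $a+1$ precisely when $a(a+2)/2<n<\binom{a+2}{2}$. I take $s=\mathrm{reg}(I)\in\{a,a+1\}$ accordingly.

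The key estimate will be a B\'ezout-type bound $\alpha(I^{(m)})\geq mn/a$, in the spirit of Proposition~\ref{Chudprop} but pairing any $F\in I^{(m)}$ with a form $G\in I_a$ of degree $a=\alpha(I)$ rather than with a form in the ideal of a Chudnovsky subset of $\binom{a+1}{2}$ points. For $n$ general points with $\dim_K I_a\geq 2$, the linear system $I_a$ has finite base locus (namely the $n$ points), hence no fixed component, so since $K$ is infinite I can avoid the finitely many proper subspaces of $I_a$ of forms divisible by any irreducible factor of $F$ and choose $G\in I_a$ sharing no common factor with $F$; applying B\'ezout to the curves defined by $F$ and $G$ gives $a\deg F\geq mn$. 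With this estimate, the hypothesis $\alpha(I^{(2r)})\geq rs+r$ translates to $2rn/a\geq r(s+1)$, equivalently $n\geq a(s+1)/2$, which reads $n\geq\binom{a+1}{2}$ in the $s=a$ subcase (always true by definition of $a$) and $n\geq a(a+2)/2$ in the $s=a+1$ subcase (true by assumption).

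The main obstacle is the sporadic configuration where $\dim_K I_a=1$, which forces $n=\binom{a+2}{2}-1$; there is then a unique $G\in I_a$ (irreducible for general points) and it may divide $F$. One handles this by a dichotomy: either $G\nmid F$ (so B\'ezout still applies) or $F=GF'$ with $F'\in I^{(m-1)}$, and a short induction on $m$ using these two possibilities gives $\alpha(I^{(m)})\geq m(a+2)/2$ whenever $a\geq 2$. The only truly degenerate case is $a=1$, which forces $n\leq 2$ so that the points are collinear; here $I$ is a complete intersection in $R$, hence $I^{(m)}=I^m$, and $I^{(2r)}=(I^r)(I^r)\subseteq M^rI^r$ follows directly from $I\subseteq M$. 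Combined, these cases cover all $n$ and all $r\geq 1$, establishing the containment.
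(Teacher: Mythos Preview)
Your argument is correct and takes a genuinely different route from the paper's proof. The paper proceeds by ad hoc case analysis: for $n\leq 9$ it quotes exact values of $\gamma(I)$ from the literature (e.g.\ $\gamma(I)=21/8$ for $n=7$, $48/17$ for $n=8$), treats $n=2,4$ as complete intersections and $n=1,3,6$ as binomial cases, and for $n>9$ invokes the Nagata-type bound $\gamma(I)\geq\sqrt{n-1}$ together with $\mathrm{reg}(I)=s$ where $\binom{s}{2}<n\leq\binom{s+1}{2}$, checking $\sqrt{n-1}\geq(s+1)/2$ by hand for $s\geq 5$. Your approach instead proves the uniform B\'ezout estimate $\alpha(I^{(m)})\geq mn/\alpha(I)$ directly (which is strictly sharper than Chudnovsky's bound $m(\alpha(I)+1)/2$ whenever $n>\binom{\alpha(I)+1}{2}$), combined with the known criterion for when general points have $I$ generated in degree $\alpha(I)$. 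This is more self-contained: it avoids all the external citations for small $n$ and the $\sqrt{n-1}$ bound for large $n$, and the only genuinely special cases are $n\leq 2$. What the paper's approach buys is that it needs no new estimate, only assembly of known pieces; what yours buys is a cleaner, uniform argument.

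One small point to tighten in the sporadic case: when $G\mid F$ you assert $F'\in I^{(m-1)}$, which needs $G$ to have multiplicity exactly $1$ at each $p_i$, not merely to be irreducible. For $n=\binom{a+2}{2}-1$ general points the unique degree-$a$ curve through them is in fact a general member of $|\OO_{\pr2}(a)|$ and hence smooth (by the obvious incidence-variety dimension count), so this is fine, but you should say ``smooth'' rather than just ``irreducible'' to justify the induction step.
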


\begin{proof}
For $n=1,3,6$, $n$ is a binomial coefficient, so
$I^{(2r)}\subseteq M^rI^r$ holds as we observed immediately above. 
For $n=2, 4$, the points are a complete intersection,
and so $I^{(2r)}=I^{2r}\subseteq M^rI^r$ holds. 
By Proposition \ref{fatptprop1} it is enough to show that 
$I$ is generated in degrees $s$ and less for some $s$
such that $\alpha(I^{(2r)})/(2r)\ge (s+1)/2$ for all $r$.
Thus it is enough to show that $\gamma(I)\ge (s+1)/2$.

Consider the case $n=5$. By \cite[Lemma 3.1]{refBH2}, $\alpha(I^{(2r)})=4r$
and hence $\gamma(I)=2$. Since the 5 points impose independent conditions on
forms of degree $s$ for any $s\ge2$, we see ${\rm reg}(I)=3$ (so $I$ is generated
in degree $s=3$ and less). Thus $\gamma(I)=2\ge(s+1)/2$
as we wanted to show.

For $n=7$, $I$ is generated in degrees 3 and less \cite{refHa1} and $\gamma(I)=21/8$
(see the proof of \cite[Proposition 4.3]{refBH2}); 
for $n=8,9$, $I$ is generated in degrees 4 and less since ${\rm reg}(I)=4$, $\gamma(I)=48/17$
(see the proof of \cite[Proposition 4.4]{refBH2}) when $n=8$ and 
(it is easy to see) $\gamma(I)=3$ when $n=9$. Thus 
$I^{(2r)}\subseteq M^rI^r$ holds for $7\le n\le 9$.
Now say $n>9$. If $n$ is a binomial coefficient $\binom{s}{2}$ we saw above
that $I^{(2r)}\subseteq M^rI^r$ holds, so assume that
$\binom{s}{2}<n<\binom{s+1}{2}$ for some $s\ge 5$. 
It is known that $\gamma(I)\ge \sqrt{n-1}$ (see \cite[Remark 8.3.5]{refPSC}
and the proof of \cite[Theorem 4.2]{refBH})
and that $I$ is generated in degree at most
$s$ (since ${\rm reg}(I)=s$). So we want to check that $\sqrt{n-1}\ge (s+1)/2$, or that
$n-1\ge (s+1)^2/4$, but $n-1\ge \binom{s}{2}$ and $\binom{s}{2}>(s+1)^2/4$ for $s\ge 5$.
\end{proof}

For later use we have the following results regarding the ideal
of 5 general points of $\pr2$.

\begin{lem}\label{5genpts} Let $I$ be the ideal of 5 general points of $\pr2$.
Then $I^{(2r)}=(I^{(2)})^r$ and $I^{(2r+1)}=I^{(2r)}I$ for all $r\geq 1$.
\end{lem}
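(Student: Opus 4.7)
The plan is to derive both equations from Proposition~\ref{Chudgen}.

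\emph{First equation.} I apply Proposition~\ref{Chudgen} with $m=2$ and $n=5$; it suffices to verify that $\alpha(I^{(2)})\beta(I^{(2)})=20$. The equality $\alpha(I^{(2)})=4$ follows from the Waldschmidt constant $\gamma(I)=2$ (established in the proof of Proposition~\ref{genptsprop}) together with $C^2\in (I^{(2)})_4$, where $C$ is the unique (irreducible) conic through the five general points. For $\beta(I^{(2)})=5$: the one-dimensionality of $(I^{(2)})_4=K\cdot C^2$ forces $\beta(I^{(2)})\geq 5$; conversely, $\dim (I^{(2)})_5=6$ (the expected dimension, achieved for general points) while $\dim C\cdot I_3=5$, so there exists $G\in (I^{(2)})_5$ not divisible by the irreducible $C$, making $\{C^2,G\}$ a regular sequence of degrees $4,5$. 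Thus $\alpha_2\beta_2=20$, and Proposition~\ref{Chudgen} gives $(I^{(2)})^r=I^{(2r)}$ for all $r\geq 1$.

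\emph{Second equation.} The inclusion $I\cdot I^{(2r)}\subseteq I^{(2r+1)}$ is immediate. For the reverse, I would argue via associated primes. Consider the $R$-module map
\[
\phi\colon I^{(2r+1)}\hookrightarrow I^{(2r)}\twoheadrightarrow I^{(2r)}/(I\cdot I^{(2r)}).
\]
The target is naturally the $R/I$-module $I^{(2r)}\otimes_R R/I$. At each prime $I(p_i)$, the two-dimensional regular local ring $R_{I(p_i)}$ makes both $I^{(2r+1)}$ and $I\cdot I^{(2r)}$ localize to $\mathfrak{m}_i^{2r+1}$, so $\phi_{I(p_i)}=0$ for every $i$. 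Hence $\phi=0$ provided every associated prime of $I^{(2r)}/(I\cdot I^{(2r)})$ lies in $\{I(p_1),\ldots,I(p_5)\}$; equivalently, provided the irrelevant maximal ideal $M$ is not an embedded associated prime.

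\emph{Main obstacle.} Ruling out $M$ as an embedded prime of $I^{(2r)}/(I\cdot I^{(2r)})$ is the delicate step. Using $I^{(2r)}=(I^{(2)})^r$ from the first equation, this reduces by induction on $r$ to the base case $r=1$: I must show that $I^{(2)}/(I\cdot I^{(2)})$ is a Cohen--Macaulay $R/I$-module of depth $1$. This can be verified directly from the explicit minimal free resolution of $I^{(2)}$ for 5 general points in $\pr2$, starting from the generators in degrees $4,5$ identified above. The inductive step then propagates the depth via a Koszul-type argument parallel to the associated-primes analysis carried out in the proof of Proposition~\ref{Chudgen}, using that $\{C^2,G\}$ cuts the expected length at every $I(p_i)$.
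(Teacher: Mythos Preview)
Your treatment of the first equation is correct and matches the paper's: both verify $\alpha(I^{(2)})\beta(I^{(2)})=4\cdot 5=2^{2}\cdot 5$ and then invoke Proposition~\ref{Chudgen}.

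For the second equation there is a genuine gap. The condition ``$M$ is not an associated prime of $I^{(2r)}/(I\cdot I^{(2r)})$'' is not a reduction of the problem but an equivalent reformulation of it: since $I\cdot I^{(2r)}$ and $I^{(2r+1)}$ agree after localizing at each $I(p_i)$ and $I^{(2r+1)}$ is saturated, the $M$-saturation of $I\cdot I^{(2r)}$ is exactly $I^{(2r+1)}$, so
\[
H^{0}_{M}\big(I^{(2r)}/(I\cdot I^{(2r)})\big)=I^{(2r+1)}/(I\cdot I^{(2r)}),
\]
which vanishes precisely when $I^{(2r+1)}=I\cdot I^{(2r)}$. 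Thus your base case amounts to proving $I^{(3)}=I\cdot I^{(2)}$ outright, which you defer to an unspecified resolution computation. Your inductive step appeals to the filtration argument of Proposition~\ref{Chudgen}, but that argument hinges on the inclusion $(f,g)\subseteq I^{(m)}$, which gives $(f,g)^{k}\subseteq (f,g)^{k-1}I^{(m)}$ and makes $(f,g)^{k-1}/(f,g)^{k-1}I^{(m)}$ free over $R/I^{(m)}$. Here $C^{2},G\in I^{(2)}$ but $C^{2}\notin I^{(3)}$ (since $\alpha(I^{(3)})=6>4$), so the analogous filtration for $(C^{2},G)^{r-1}I^{(3)}$ does not simplify in the same way, and no substitute is offered.

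The paper instead proves $I^{(2r+1)}\subseteq I\cdot I^{(2r)}$ by a direct degree-by-degree argument. After checking $\alpha(I^{(2r+1)})=2(2r+1)$, it splits into two ranges: for $2(2r+1)\le t<5(2r+1)/2$, B\'ezout against the conic $Q$ forces $Q\mid F$ for every $F\in (I^{(2r+1)})_t$, giving $(I^{(2r+1)})_t=Q\cdot(I^{(2r)})_{t-2}\subseteq (I\cdot I^{(2r)})_t$; for $t\ge \lceil 5(2r+1)/2\rceil=5r+3$, it invokes \cite[Proposition~2.4]{refBH2} to express $(I^{(2r+1)})_t$ as a product of a graded piece of $I^{(2r)}$ with a graded piece of $I$.
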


\begin{proof}
We saw in the proof of Proposition \ref{genptsprop} that $\alpha(I^{(m)})=2m$.
Also, by B\'ezout's Theorem we must have $2\beta(I^{(m)})\geq 5m$, and
by \cite[Remark I.5.5]{refHa2} (or by \cite[Theorem III.1(a)]{refHa3}), 
$(I^{(m)})_t$ has no non-constant common factors for $2t\ge 5m$. 
Thus $\beta(I^{(m)})=\lceil\frac{5m}{2}\rceil$. In particular,
$\alpha(I^{(2r)})\beta(I^{(2r)})=5(2r)^2$, so $I^{(2r)}=(I^{(2)})^r$
by Proposition \ref{Chudgen}.

Now consider $I^{(2r+1)}$. Clearly, $I^{(2r)}I=I^{2r}I=I^{2r+1}\subseteq I^{(2r+1)}$, so consider
the reverse inclusion.
By B\'ezout's Theorem, since $\beta(I^{(2)})=5$, we have $5\alpha(I^{(2r+1)})\geq 2*5(2r+1)$,
and hence $\alpha(I^{(2r+1)})\geq 2(2r+1)$, but $\alpha(I^{(2r+1)})\leq \alpha(I^{2r+1})=2(2r+1)$,
so we have $\alpha(I^{(2r+1)})=2(2r+1)$. Thus $0=(I^{(2r+1)})_t\subseteq (I^{(2r)}I)_t$ for $t<2(2r+1)$.
If $2(2r+1)\leq t < 5(2r+1)/2$, then $Q$ is a common factor for $(I^{(2r+1)})_t$, by B\'ezout's Theorem,
where $Q$ is a homogeneous form of degree 2 defining the unique conic through the five general points. 
Thus $(I^{(2r+1)})_t=Q(I^{(2r)})_{t-2}$, but $Q\in I_2$, so 
$(I^{(2r+1)})_t\subseteq(I^{(2r)})_{t-2}I_2\subseteq ((I^{(2r)})I)_t$.
Finally, assume $t \geq 5(2r+1)/2=5r+3$. Then 
$(I^{(2r+1)})_t=(I^{(2r)})_{5r}I_{3+(t-5r)}\subseteq ((I^{(2r)})I)_t$
by \cite[Proposition 2.4]{refBH2}, and hence 
$I^{(2r+1)}\subseteq I^{(2r)}I$.
\end{proof}

\begin{rem}\rm
We close this section with a discussion of issues raised by Proposition \ref{Chudgen}.
For example, the bound in Proposition \ref{Chudgen}(ii) improves on Chudnovsky's bound
in Proposition \ref{Chudprop} only when $m\alpha(I)>\beta(I^{(m)})$, but it certainly can happen that
$m\alpha(I)<\beta(I^{(m)})$. For example, consider the ideal $I$ of $n>d^2$ points on a smooth plane
curve of degree $d$. Then (by B\'ezout's Theorem) we have both $\alpha(I)=d$ and
$\beta(I^{(m)})d\geq nm$, so $\beta(I^{(m)})\geq nm/d > \alpha(I)m$.
There also are cases with $m\alpha(I)>\beta(I^{(m)})$. For example, given $m>0$,
let $a=(2m+1)^2-1$ and let $n=2(2m+1)$. It is easy now to check that
$\binom{a+1}{2}=n^2\binom{m+1}{2}$. Let $I$ be the ideal of the union $Z$ of
$n^2$ general points in $\pr2$. 
By \cite{refCM,refEv,refR}, the fat point scheme $iZ$ imposes
independent conditions on forms of degree $t$ for any $t$ such that $(I^{(i)})_t\neq0$.
In particular, it follows that $\binom{\alpha(I)+2}{2}>n^2$, so $m\alpha(I)>m(\sqrt{2}n-2)$.
Since $\binom{a+1}{2}=n^2\binom{m+1}{2}$, it also follows that
$\beta(I^{(m)})\leq {\rm reg}(I^{(m)})=a=(2m+1)^2-1$, hence for $m\gg0$ we have
$\beta(I^{(m)})< (2m+1)^2< 4\sqrt{2}m^2<m(\sqrt{2}n-2)<m\alpha(I)$.

Another issue raised by Proposition \ref{Chudgen} is whether there are cases 
of $n$ points in the plane and integers $m>0$ where the ideal $I$ of the points satisfies
$\alpha(I^{(m)})\beta(I^{(m)})=m^2n$. 
This holds for the ideal $I$ of a star configuration by Corollary \ref{starcor1},
hence $I^{(2k)}=(I^{(2)})^k$ for all $k\geq1$.
For additional examples, consider the scheme $Z$ consisting of
$n$ general points of the plane. For $n\leq8$ points, we can assume the points
lie on a smooth cubic curve, in which case the results of 
\cite{refHa3, refGuH, refGHM} can be used to determine $\alpha(I^{(m)})$ and $\beta(I^{(m)})$.
For $n=1$, $\alpha(I^{(m)})=\beta(I^{(m)})=m$ for all $m\geq1$, hence
by Proposition \ref{Chudgen} we have $I^{(k)}=I^k$ for all $k\geq1$. Of course,
in this case $Z$ is a complete intersection.
For $n=2$, $\alpha(I^{(m)})=m$ and $\beta(I^{(m)})=2m$ for all $m\geq1$, hence
we have $I^{(k)}=I^k$ for all $k\geq1$. Of course,
in this case $Z$ is again a complete intersection.
For $n=3$, $\alpha(I^{(m)})=\lceil 3m/2\rceil$ and $\beta(I^{(m)})=2m$ for all $m\geq1$, hence
we have $I^{(2k)}=(I^{(2)})^k$ for all $k\geq1$. In this case $Z$ is a star configuration.
For $n=4$, $\alpha(I^{(m)})=\beta(I^{(m)})=2m$ for all $m\geq1$, hence
we have $I^{(k)}=I^k$ for all $k\geq1$. Of course,
in this case $Z$ is yet again a complete intersection.
For $n=5$, we have $I^{(2k)}=(I^{(2)})^k$ for all $k\geq1$; see Lemma \ref{5genpts} and its proof. 
For $n=6$, $\alpha(I^{(m)})=\lceil 12m/5\rceil$ and $\beta(I^{(m)})=\lceil 5m/2\rceil$ for all $m\geq1$, hence
we have $I^{(10k)}=(I^{(10)})^k$ for all $k\geq1$. 
For $n=7$, $\alpha(I^{(m)})=\lceil 21m/8\rceil$ and $\beta(I^{(m)})=\lceil 8m/3\rceil$ for all $m\geq1$, hence
we have $I^{(24k)}=(I^{(24)})^k$ for all $k\geq1$. 
For $n=8$, $\alpha(I^{(m)})=\lceil 48m/17\rceil$ and $\beta(I^{(m)})=\lceil 17m/6\rceil$ for all $m\geq1$, hence
we have $I^{(102k)}=(I^{(102)})^k$ for all $k\geq1$. 

Again applying the results of \cite{refHa3},
for $n=9$ general points (which we may assume therefore lie on a smooth cubic curve), 
we have $\alpha(I^{(m)})=3m$ and $\beta(I^{(m)})=3m+1$,
so we never have $\alpha(I^{(m)})\beta(I^{(m)})=m^2n$, but if the 9 points $p_i$ are chosen
to be points on a smooth plane cubic curve such that
$p_1+\cdots+p_9-p$ has order $r$ in the divisor class group of the cubic (where $p$ is a flex point), 
then $\alpha(I^{(m)})=3m$ and $\beta(I^{(m)})$ is $3m$ if $r|m$, and $3m+1$ if $r$ does not divide $m$.
Thus $\alpha(I^{(r)})\beta(I^{(r)})=9r^2$, so $I^{(rk)}=(I^{(r)})^k$ for all $k\geq0$.
\end{rem}

\begin{rem}\rm
Finally, we raise two questions. Given distinct points $p_i$ in the plane and positive integers $m_i$,
consider the ideal $I=\cap_i\,I(p_i)^{m_i}$. Is it true that $I^{(mk)}=(I^{(m)})^k$ for $k\geq1$
(and hence that the symbolic Rees algebra $\oplus I^{(n)}$ is Noetherian)
if $\alpha(I^{(m)})\beta(I^{(m)})=m^2\sum_im_i^2$?
See \cite[Example 5.1]{refBH2} for examples with $\alpha(I)\beta(I)=\sum_im_i^2$
for which $I^{(k)}=I^k$ for all $k\geq1$. Conversely, if $\oplus I^{(n)}$ is Noetherian,
must $\alpha(I^{(m)})\beta(I^{(m)})=m^2\sum_im_i^2$ hold for some $m$?
\end{rem}

\renewcommand{\thethm}{\thesubsection.\arabic{thm}}
\setcounter{thm}{0}

\section{Additional questions and conjectures}\label{section: addquests}

If for some ideal $I$ there is a $d$ such that one has $I^{(m)}\subseteq I^r$ for all
$m\geq dr$, one can next ask for what constants $c$ does
$m\geq dr-c$ guarantee $I^{(m)}\subseteq I^r$. We discuss questions and conjectures
related to this problem of subtracting a constant in Section \ref{sect4.1}. In Section \ref{sect4.2}
we discuss questions and conjectures arising out of refinements of the
Waldschmidt-Skoda bound $({}^\circ)$ of the Introduction.

\subsection{Subtracting a constant}\label{sect4.1}
The second named author has raised the question whether a radical ideal $I$ of a finite set of points 
in $\pr2$ always satisfies $I^{(3)}\subseteq I^2$. Examples suggested to the first named author the following
conjectural generalization \cite[Conjecture 8.4.2]{refPSC}:

\begin{conj}\label{Essenconj}
Let $I\subseteq K[\pr N]$ be a homogeneous ideal.
Then $I^{(rN-(N-1))}\subseteq I^r$ holds for all $r$. 
\end{conj}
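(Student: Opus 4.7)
The plan is to bootstrap from the Ein--Lazarsfeld--Smith / Hochster--Huneke containment $I^{(rN)}\subseteq I^r$ of \cite{refELS,refHH} by borrowing the extra $N-1$ symbolic factors from $I$ itself. Given $F\in I^{(rN-(N-1))}$ and any $G_1,\ldots,G_{N-1}\in I$, the standard product inclusion for symbolic powers $I^{(a)}\cdot I^{(b)}\subseteq I^{(a+b)}$ yields
\[
F\cdot G_1\cdots G_{N-1}\ \in\ I^{(rN-(N-1))}\cdot I^{(N-1)}\ \subseteq\ I^{(rN)}\ \subseteq\ I^r.
\]
If the $G_i$ can be chosen so that $G_1\cdots G_{N-1}$ is a nonzerodivisor modulo $I^r$, then the colon identity $I^r:(G_1\cdots G_{N-1})=I^r$ forces $F\in I^r$ immediately.

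First I would reduce to the case that $I$ is the radical ideal of a finite set of points $p_1,\dots,p_n\in\pr N$, so that every minimal prime $P_i=I(p_i)$ has height $N$ and one can pick $N-1$ sufficiently general forms $G_1,\dots,G_{N-1}$ in a suitable degree component $I_d$ forming an $R$-regular sequence. The $r=1$ case is then trivial, and the real content begins at $r=2$.

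The main obstacle is the presence of embedded primes of the ordinary power $I^r$. Although the minimal primes of $I^r$ are exactly the $P_i$, the irrelevant maximal ideal $M$ (and often other higher-dimensional primes) typically appears as an embedded associated prime of $I^r$, so a generic element of $I$ becomes a zerodivisor modulo those components. To counter this, one could decompose $I^r$ as the intersection of its unmixed part --- which coincides with the symbolic power $I^{(r)}$ --- and the remaining embedded contribution, run the colon argument on the unmixed part, and separately argue that the embedded part is not an obstruction. When Conjecture~\ref{fatptconj} is available, the containment $I^{(rN)}\subseteq M^{r(N-1)}I^r$ might let one absorb the $M$-embedded piece. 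For $N=2$ this essentially reduces the issue to the single embedded prime $M$, but even there, passing from $FG_1\cdots G_{N-1}\in I^r$ to $F\in I^r$ seems to require genuinely new input; this is precisely why the very special case $r=2$, $N=2$ --- the question $I^{(3)}\subseteq I^2$ of the second author --- remains open.

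A parallel line of attack would be to refine the asymptotic multiplier ideal proof of \cite{refELS}, which deduces $I^{(rN)}\subseteq I^r$ via $I^{(rN)}\subseteq \mathcal{J}(I^r)\subseteq I^r$ and iterated Skoda. The conjecture would follow from the sharper statement $I^{(rN-(N-1))}\subseteq \mathcal{J}(I^r)$, which would require a local-vanishing or positivity input that effectively saves $N-1$ orders of vanishing at each point. I expect this step to be the principal obstruction, and I suspect that a fundamentally new idea --- perhaps of a differential flavor reminiscent of Fact~\ref{homogEMfact}, or of an $F$-singularity flavor along the lines of \cite{refHH1,refTY} --- will be needed to go beyond it.
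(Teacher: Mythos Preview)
The statement is Conjecture~\ref{Essenconj}, which the paper explicitly leaves open in general; there is no proof in the paper to compare against. The paper establishes only special cases: monomial ideals and radical ideals of points in characteristic $p$ with $r$ a $p$-power (both via the Frobenius-power pigeonhole of Lemma~\ref{FrobPowerProp}), star configurations (via \cite{refPSC}), and finite sets of general points in $\pr2$ (via \cite{refBH}). None of these go through a colon argument or a multiplier-ideal refinement.

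Your overall framing is correct: since $rN-(N-1)\ge r$, one has $I^{(rN-(N-1))}\subseteq I^{(r)}$ for free, so the entire content of the conjecture is the passage from $I^{(r)}$ to $I^r$, which is governed by the embedded components of $I^r$. But the specific colon mechanism you propose cannot work, and for a more basic reason than the one you name. Any nonunit $G\in I$ lies in every minimal prime $P_i$ of $I$, hence in every associated prime of $I^r$ (minimal and embedded alike, since each embedded prime contains some $P_i\supseteq I$). Thus $G_1\cdots G_{N-1}$ is \emph{always} a zerodivisor on $R/I^r$, and the identity $I^r:(G_1\cdots G_{N-1})=I^r$ never holds for $G_i\in I$; the argument does not get off the ground, independently of whether embedded primes are present. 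Your multiplier-ideal direction is a reasonable thing to contemplate, and you are right that the missing step $I^{(rN-(N-1))}\subseteq\mathcal{J}(I^r)$ would need genuinely new input; the paper does not pursue it.
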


Conjecture \ref{Essenconj} holds for star configurations \cite[Example 8.4.8]{refPSC};
examples of star configurations also show $I^{(rN-N)}\subseteq I^r$ fails in general.
When $N=2$, Conjecture \ref{Essenconj} is true 
for any finite set of general points in $\pr2$ by \cite[Remark 4.3]{refBH}.
Thus, while Conjecture \ref{Essenconj} is open in general, it is plausible (at least for ideals of points).
In fact, the second named author observed that
the principle underlying the main result of \cite{refHH} shows that Conjecture \ref{Essenconj} 
is true for radical ideals of any finite set of points if ${\rm char}(K)=p>0$ when $r$ is a power of $p$
\cite[Remark 8.4.4]{refPSC}.
The same basic argument also verifies Conjecture \ref{Essenconj} for monomial ideals 
(with no restriction on the characteristic) \cite[Remark 8.4.5]{refPSC}. 

The key in both cases is the use of nice behavior of Frobenius powers.
If $I\subseteq R$ is an ideal, define its $q$th {\it Frobenius power\/} $I^{[q]}$ to be the ideal generated by
all $v^q$ for $v\in I$. If $I$ is a monomial ideal, then $I^{[q]}$ is generated by the $q$th
powers of any set of monomial generators of $I$. And if ${\rm char}(K)=p>0$ and $q$ is a power of $p$,
then $I^{[q]}$ is generated by $q$th powers of any set of generators of $I$.
A fundamental fact for ideals $J_1,\ldots,J_s\subseteq R$ is that 
$(\cap_i J_i)^{[q]}=\cap_i (J_i^{[q]})$ if either $J_i$ is monomial for each $i$ (see \cite[Remark 8.4.5]{refPSC})
or if $p={\rm char}(K)>0$ and $q$ is a power of $p$ (by flatness of Frobenius; see \cite[Lemma 13.1.3, p. 247]{refSH} 
and \cite{refK}). As a direct consequence we  obtain:

\begin{lem}\label{FrobPowerProp}
Let $g>0$ be an integer and let
$J_1,\ldots,J_s\subseteq R$ be ideals, each generated by at most $g$ elements. 
Assume either that each $J_i$ is monomial, or that $p={\rm char}(K)>0$ and $r$ is a power of $p$.
Then $\cap_i (J_i^m)\subseteq (\cap_i J_i)^r$ as long as $m\ge gr-g+1$.
\end{lem}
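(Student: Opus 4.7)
The plan is to use the Frobenius-style powers $J^{[r]}$ as a bridge between $J_i^m$ on the left and $(\cap_i J_i)^r$ on the right, breaking the containment into three small steps.

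First, I would prove that under the hypothesis $m \geq gr - g + 1$, one has $J_i^m \subseteq J_i^{[r]}$ for each $i$. Fix generators $a_1,\ldots,a_g$ of $J_i$ (padding with zeros if $J_i$ has fewer than $g$ generators). Every generator of $J_i^m$ can be written as a product $a_1^{\alpha_1}\cdots a_g^{\alpha_g}$ with $\alpha_1 + \cdots + \alpha_g = m$. Since $m \geq g(r-1) + 1$, the pigeonhole principle forces some $\alpha_j \geq r$, so the monomial lies in $(a_j^r) \subseteq J_i^{[r]}$. Thus $J_i^m \subseteq J_i^{[r]}$.

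Second, I would invoke the identity
$$\bigcap_i J_i^{[r]} \;=\; \Bigl(\bigcap_i J_i\Bigr)^{[r]},$$
which holds in precisely the two settings we are allowed to assume: either each $J_i$ is monomial (by the direct monomial calculation recalled just before the lemma, following \cite[Remark 8.4.5]{refPSC}), or $r$ is a power of $p = \mathrm{char}(K)$ (by flatness of Frobenius, \cite[Lemma 13.1.3]{refSH} and \cite{refK}). This is the one place where the hypothesis on $r$ or on monomiality is actually used.

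Third, by the very definition of $J^{[r]}$ as the ideal generated by all $v^r$ with $v \in J$, and since $v^r \in J^r$ for any $v \in J$, we have $(\cap_i J_i)^{[r]} \subseteq (\cap_i J_i)^r$. Chaining the three inclusions gives
$$\bigcap_i J_i^m \;\subseteq\; \bigcap_i J_i^{[r]} \;=\; \Bigl(\bigcap_i J_i\Bigr)^{[r]} \;\subseteq\; \Bigl(\bigcap_i J_i\Bigr)^{r},$$
which is exactly the claim. There is no real obstacle: the substantive input is the Frobenius-intersection identity, which is already packaged for us, and the only computation is the sharp pigeonhole bound $m \geq gr - g + 1$ that appears in the statement.
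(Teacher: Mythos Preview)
Your proof is correct and is essentially identical to the paper's own argument: the paper also shows $J_i^m\subseteq J_i^{gr-g+1}\subseteq J_i^{[r]}$ via the same pigeonhole on generators, then chains through $\cap_i J_i^{[r]}=(\cap_i J_i)^{[r]}\subseteq(\cap_i J_i)^r$ using the Frobenius-intersection identity stated just before the lemma.
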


\begin{proof}
Because $J_i$ has at most $g$ generators, any product of a choice of $gr-g+1$ 
of these generators is divisible by the $r$th power of one of the generators.
Thus $J_i^m\subseteq J_i^{gr-g+1}\subseteq J_i^{[r]}$, so we have 
$\cap_i (J_i^m)\subseteq \cap_i (J_i^{[r]})=(\cap_iJ_i)^{[r]}\subseteq (\cap_i J_i)^r$.
\end{proof}

\begin{rem} \rm We now see that Conjecture \ref{Essenconj} holds for any monomial ideal $I\subset K[\pr N]$.
If $I$ is not saturated, then $I^{(rN-(N-1))}=I^{rN-(N-1)}\subseteq I^r$, and if $I$ is saturated
then there is a (not necessarily irredundant) primary decomposition 
$I=\cap_i J_i$ where each $J_i$ is monomial, primary and generated by
positive powers of the variables in some proper subset (depending on $i$)
of the $N+1$ variables (and hence $J_i$ has at most $N$ generators). 
Moreover, it is not hard to show that $I^{(m)}\subseteq \cap_i (J_i^m)$ (see  \cite[Remark 8.4.5]{refPSC}).
Lemma \ref{FrobPowerProp} now applies, and we have $I^{(m)}\subseteq I^r$ whenever
$m\ge rN-N+1$. Similarly, if $p={\rm char}(K)>0$ and $r$ is a power of $p$, then
for the ideal $I=\cap_i I(p_i)$ of any finite set of distinct points $p_i\in\pr N$,
by Lemma \ref{FrobPowerProp} we have 
$I^{(m)}=\cap_i (I(p_i)^m)\subseteq(\cap_i I(p_i))^r=I^r$ 
as long as $m\ge rN-N+1$.

\end{rem}

When $N=2$ Chudnovsky's bound
$\gamma(I)\ge (\alpha(I)+1)/2$ suggests another conjecture (in the perhaps weak sense that
Conjecture \ref{p2conj} implies the bound):

\begin{conj}\label{p2conj}
Let $I\subseteq K[\pr 2]$ be the homogeneous radical ideal of a finite set of points.
Then $I^{(m)}\subseteq I^r$ holds whenever $m/r\ge 2\alpha(I)/(\alpha(I)+1)$. 
\end{conj}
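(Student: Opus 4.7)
The hypothesis $m/r \geq 2\alpha(I)/(\alpha(I)+1)$ is precisely the threshold at which Chudnovsky's inequality (Proposition~\ref{Chudprop}) forces $\alpha(I^{(m)}) \geq m(\alpha(I)+1)/2 \geq r\alpha(I) = \alpha(I^r)$, so the two ideals are compatible in minimum degree. The conjecture asks that this numerical compatibility always upgrades to an algebraic containment. For $m \geq 2r$ the containment is already known by \cite{refELS}; the new content lies in the range $2\alpha(I)r/(\alpha(I)+1) \leq m < 2r$, which is nonempty exactly when $\alpha(I) > 1$.

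My plan would be induction on $r$. The base case $r = 1$ is trivial since $I^{(m)} \subseteq I$. For the inductive step, set $\alpha = \alpha(I)$ and take $F \in (I^{(m)})_t$; by Chudnovsky $t \geq r\alpha$. The aim is to exhibit a decomposition $F = \sum_i G_i H_i$ with $G_i \in I$ and $H_i \in I^{(m-1)}$, at which point each $H_i$ lies in $I^{r-1}$ by the inductive hypothesis (the numerical condition $(m-1)/(r-1) \geq 2\alpha/(\alpha+1)$ persists as long as $\alpha \geq 1$). The natural tool to produce such a factorization is B\'ezout in $\pr 2$: for a generic $G \in I_\alpha$, the curves $V(F)$ and $V(G)$ meet with total multiplicity $\alpha t$, of which at least $m$ is contributed at each of the $n$ points $p_i$; one hopes that the resulting excess intersection forces $F$ to split off factors inductively, in the spirit of the explicit analysis in Lemma~\ref{5genpts}.

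When $I$ is generated in its initial degree $\alpha$ — e.g.\ star configurations (Corollary~\ref{starcor1}) or $\binom{s}{2}$ general points — the argument simplifies because $I^r$ is generated in degree $r\alpha$, giving $(I^r)_t = R_{t-r\alpha} \cdot (I^r)_{r\alpha}$ for $t \geq r\alpha$. Combined with Chudnovsky's vanishing $(I^{(m)})_t = 0$ for $t < r\alpha$, this reduces the containment to finitely many degrees and should recover the conjecture for essentially those $I$ where Conjecture~\ref{fatptconj} is already known.

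The main obstacle is propagating the argument to general $I$ for which $\mathrm{reg}(I) > \alpha(I)$. There is no general reason that every element of $(I^{(m)})_t$ factors through $I$, and whether $I^{(m)} = I \cdot I^{(m-1)}$ is a subtle question tied to Noetherianity of the symbolic Rees algebra. A proof in full generality will likely require either a new multiplier-ideal technique refining \cite{refELS} to exploit the minimum degree $\alpha(I)$ rather than just the ambient dimension $N = 2$, or, in positive characteristic with $r = p^e$, a Frobenius-splitting argument paired with the sharper Chudnovsky bound in the spirit of \cite[Remark 8.4.4]{refPSC}. Without such a new input, the conjecture appears to remain out of reach for arbitrary radical ideals of points in $\pr 2$.
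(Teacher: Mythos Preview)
This statement is labeled a \emph{conjecture} in the paper and is not proved there in general; you correctly arrive at the same conclusion in your final paragraph, so there is no general proof to compare.

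For the special cases, however, your sketch diverges from the paper and contains a gap. The paper verifies Conjecture~\ref{p2conj} for star configurations by quoting \cite[Theorem~1.2.1(b)]{refBH}: one has $I^{(m)}\subseteq I^r$ whenever $m/r\geq {\rm reg}(I)/\gamma(I)$, and for a star configuration ${\rm reg}(I)=\alpha(I)$ and $\gamma(I)=(\alpha(I)+1)/2$, so this threshold is exactly $2\alpha(I)/(\alpha(I)+1)$. Complete intersections are handled trivially via $I^{(m)}=I^m$, and general points by further citations. Your proposed route for the case ``$I$ generated in degree $\alpha$'' is not complete as written: knowing that $I^r$ is generated in degree $r\alpha$ and that $(I^{(m)})_t=0$ for $t<r\alpha$ does \emph{not} reduce the containment to finitely many degrees, nor does it yield $(I^{(m)})_t\subseteq (I^r)_t$ for $t\geq r\alpha$ --- that is precisely the statement in question, and $I^{(m)}$ need not be generated in degree $r\alpha$. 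Likewise, your inductive scheme hinges on a decomposition amounting to $I^{(m)}\subseteq I\cdot I^{(m-1)}$, which is itself a delicate question about the symbolic Rees algebra and is not something the paper establishes even in the cases where the conjecture is known.
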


This conjecture does indeed imply Chudnovsky's bound
$\gamma(I)\ge (\alpha(I)+1)/2$. To see this, let $m=2\alpha(I)t$ and let $r=(\alpha(I)+1)t$
for integers $t\geq1$. Conjecture \ref{p2conj} then implies that $I^{(m)}\subseteq I^r$ which in turn implies
$\alpha(I^{(m)})\geq\alpha(I^r)$ and hence $\alpha(I^{(m)})/m\geq (r/m)\alpha(I)$.
Taking limits as $t\to\infty$ gives $\gamma(I)\geq (\alpha(I)+1)/2$.

Conjecture \ref{p2conj} is asymptotically stronger than Conjecture \ref{Essenconj} since 
$2r-1> 2r\alpha(I)/(\alpha(I)+1)$ for $r\gg0$.
But as with Conjecture \ref{Essenconj}, Conjecture \ref{p2conj} holds
for star configurations: if $I$ is the ideal of a star configuration,
then $\alpha(I)={\rm reg}(I)$ and $\gamma(I)=(\alpha(I)+1)/2$, hence we have
$I^{(m)}\subseteq I^r$ by \cite[Theorem 1.2.1(b)]{refBH} (since $m/r\ge 2\alpha(I)/(\alpha(I)+1)
= {\rm reg}(I)/\gamma(I)$).

It is easy to see that Conjecture \ref{p2conj} holds if $I$ is a complete 
intersection, since then $I^{(m)}=I^m$.
Conjecture \ref{p2conj} also holds 
for general sets of $n$ points (use \cite[Remark 4.3]{refBH} 
in case $n\ge6$, since then
$\alpha(I)\ge3$ and so $2\alpha(I)/(\alpha(I)+1)\ge 3/2$, use \cite[Theorem 3.4(b)]{refBH2} for
$n=5$, and for $n<5$ note that $n$ general 
points give either a star or a complete intersection or both). 

Giving Conjecture \ref{Essenconj} an evolutionary twist, we obtain another possibility:

\begin{conj}\label{EvoEssenconj}
Let $I\subseteq K[\pr N]$ be the ideal of a finite set of points $p_i\in\pr N$.
Then $I^{(rN-(N-1))}\subseteq M^{(r-1)(N-1)}I^r$ holds for all $r\geq1$. 
\end{conj}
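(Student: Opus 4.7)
The plan is to follow the two templates already used in the paper — Euler identities in characteristic zero, and Frobenius-power methods in positive characteristic — and try to sharpen each to carry the extra $M^{(r-1)(N-1)}$-factor. Since Conjecture \ref{fatptconj} together with $I^{(rN)}\subseteq I^{(rN-(N-1))}$ does not directly imply Conjecture \ref{EvoEssenconj}, I would first verify \ref{EvoEssenconj} in the cases where \ref{fatptconj} is already known — star configurations (Corollary \ref{starcor1}) and general points in $\pr 2$ (Proposition \ref{genptsprop}) — to see what shape the argument should take.

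For the characteristic-zero attack, set $m=rN-(N-1)$ and $j=m-r=(r-1)(N-1)$. If $F\in I^{(m)}$ has degree $d$, then for each multi-index $\alpha$ with $|\alpha|=j$ the partial $\partial^\alpha F$ lies in $I^{(m-j)}=I^{(r)}$, and the iterated Euler identity
\[
d(d-1)\cdots(d-j+1)\,F \;=\; \sum_{|\alpha|=j}\binom{j}{\alpha}\,x^\alpha\,\partial^\alpha F
\]
(whose leading coefficient is nonzero in characteristic zero) gives $F\in M^{(r-1)(N-1)} I^{(r)}$. The exponent on $M$ is exactly what the conjecture predicts; the real obstacle is replacing $I^{(r)}$ by $I^r$, since generically $I^{(r)}\not\subseteq I^r$. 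A natural refinement uses degree bounds: each $\partial^\alpha F$ has degree at least $\alpha(I^{(m)})-(r-1)(N-1)$, which Chudnovsky-type inequalities place near $r\alpha(I)$; if this exceeds the top degree of a minimal generator of $I^{(r)}/I^r$, then $\partial^\alpha F\in I^r$ and the argument closes. Quantifying the latter in terms of $\alpha(I)$ would give the conjecture at least for $r\gg 0$.

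For characteristic $p$ with $r$ a power of $p$, a pigeonhole refinement of Lemma \ref{FrobPowerProp} yields $I(p_i)^m \subseteq M^{(r-1)(N-1)}I(p_i)^{[r]}$: among the $rN-(N-1)$ linear-form factors in any generator of $I(p_i)^m$, some generator appears at least $r$ times (contributing an element of $I(p_i)^{[r]}$), while the remaining $(r-1)(N-1)$ linear factors all lie in $M$. Intersecting over $i$ then demands that $M^{(r-1)(N-1)}$ distribute past the intersection, which fails in general — already $MJ_1\cap MJ_2\supsetneq M(J_1\cap J_2)$ for $J_1=(x)$, $J_2=(y)$ in $K[x,y]$ — and I expect this distributivity failure to be the central obstacle in either approach. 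Overcoming it will likely require either a monomial/coefficient-level analysis tailored to the specific geometry of point ideals, or a more symmetric reorganization of the Euler sum that produces a single element of $I^r$ in place of a sum of $I^{(r)}$-terms; bridging the gap between $I^{(r)}$ (or $I^{[r]}$) and $I^r$ inside an $M^{(r-1)(N-1)}$-envelope is, I believe, the heart of the conjecture.
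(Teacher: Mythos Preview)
The statement you are attempting is labelled a \emph{conjecture} in the paper and is not proved there in general; there is no ``paper's own proof'' to compare against. What the paper does establish are special cases --- star configurations and complete intersections (Corollary~\ref{EvoEssenconjRem}) and general points in $\pr2$ in characteristic zero (Corollary~\ref{genptsexample2}) --- all via Lemma~\ref{EvoEssenconjLem}. That lemma avoids differentiation entirely: once one has the containment $I^{(rN-(N-1))}\subseteq I^r$ (Conjecture~\ref{Essenconj}, known in those cases) together with the degree bound $\alpha(I^{(rN-(N-1))})\ge rs+(r-1)(N-1)$ for $s$ bounding the generating degrees of $I$, the extra $M^{(r-1)(N-1)}$-factor is automatic because $(I^r)_t=(M^{(r-1)(N-1)}I^r)_t$ for $t\ge rs+(r-1)(N-1)$.

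Your iterated-Euler argument is correct as stated and does yield $I^{(m)}\subseteq M^{(r-1)(N-1)}I^{(r)}$ in characteristic zero; you have also correctly located the real obstruction, namely passing from $I^{(r)}$ to $I^r$. But your proposed repair --- bounding $\deg\partial^\alpha F$ from below by $\alpha(I^{(m)})-(r-1)(N-1)$ and comparing with the degrees where $I^{(r)}$ and $I^r$ differ --- amounts to requiring $(I^{(r)})_t=(I^r)_t$ for all $t\ge\alpha(I^{(m)})-(r-1)(N-1)$, i.e.\ a bound on the saturation degree of $I^r$. Any such bound already gives $I^{(m)}\subseteq I^r$ outright (since $I^{(m)}\subseteq I^{(r)}$ and the two agree with $I^r$ in the relevant degrees), at which point the paper's degree-counting Lemma~\ref{EvoEssenconjLem} applies directly and the Euler step is superfluous. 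So the differentiation route, while elegant, does not unlock cases beyond what the paper's simpler mechanism handles; replacing $I^{(r)}$ by $I^r$ is essentially the same open problem as Conjecture~\ref{Essenconj}. Your Frobenius-pigeonhole argument likewise runs into the distributivity failure you name, and that obstacle is genuine --- the paper makes no attempt to push further in that direction either.
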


\begin{lem}\label{EvoEssenconjLem}
Let $I\subseteq K[\pr N]$ be the ideal of a finite set of points $p_i\in\pr N$.
If $I^{(Nr-(N-1))}\subseteq I^r$ and $\alpha(I^{(Nr-(N-1))}) \ge rs + (r-1)(N-1)$
for some $s$ such that $I$ is generated in degrees $s$ and less, then
$I^{(rN-(N-1))}\subseteq M^{(r-1)(N-1)}I^r$. 
\end{lem}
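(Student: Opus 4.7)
The proof plan mirrors Proposition \ref{fatptprop1} almost verbatim, since the hypotheses have been engineered so that the same two-range argument works. The key observation is that if $I$ is generated in degrees at most $s$, then $I^r$ is generated in degrees at most $rs$, so any form in $(I^r)_t$ of degree $t\ge rs+(r-1)(N-1)$ is a combination of generators of $I^r$ of degree $\le rs$ multiplied by forms of degree $\ge (r-1)(N-1)$, hence lies in $M^{(r-1)(N-1)}I^r$.

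So first I would split into degree cases. For $t < \alpha(I^{(Nr-(N-1))})$, we trivially have $(I^{(Nr-(N-1))})_t=0\subseteq (M^{(r-1)(N-1)}I^r)_t$. The hypothesis $\alpha(I^{(Nr-(N-1))})\ge rs+(r-1)(N-1)$ ensures that this case covers all degrees strictly below the threshold $rs+(r-1)(N-1)$. For the remaining range $t\ge rs+(r-1)(N-1)$, I would invoke the assumed containment $I^{(Nr-(N-1))}\subseteq I^r$ to write $(I^{(Nr-(N-1))})_t\subseteq (I^r)_t$, and then use the generation-in-low-degree argument above to conclude $(I^r)_t=(M^{(r-1)(N-1)}I^r)_t$ at such degrees.

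Combining both cases yields $(I^{(Nr-(N-1))})_t\subseteq (M^{(r-1)(N-1)}I^r)_t$ for every $t$, which is exactly $I^{(Nr-(N-1))}\subseteq M^{(r-1)(N-1)}I^r$. There is no real obstacle: the proof is a direct degree-by-degree inclusion check, and the only nontrivial input is the two hypotheses, which are designed precisely to cover the two complementary degree ranges. The lemma is essentially a template: the symbolic power $I^{(Nr-(N-1))}$ vanishes in low degrees (by the $\alpha$ hypothesis) and embeds into $I^r$ in high degrees (by the containment hypothesis), and the high-degree piece automatically picks up the $M^{(r-1)(N-1)}$ factor because $I^r$ is generated strictly below the switchover threshold.
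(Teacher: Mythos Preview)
Your proposal is correct and is exactly the approach the paper takes: the paper's proof simply says ``the same argument (with $m=1$) used in the proof of Proposition~\ref{fatptprop1} works here,'' and you have faithfully unpacked that degree-by-degree inclusion argument. There is nothing to add or correct.
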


\begin{proof}
The same argument (with $m=1$) used in the proof of Proposition \ref{fatptprop1}
works here.
\end{proof}

\begin{cor}\label{EvoEssenconjRem}
Let $I\subseteq K[\pr N]$ be the ideal of a finite set of points $p_i\in\pr N$,
comprising either a star configuration or a complete intersection.
Then $I^{(rN-(N-1))}\subseteq M^{(r-1)(N-1)}I^r$ holds for all $r\geq1$. 
 \end{cor}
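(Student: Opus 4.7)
The plan is to handle the two cases separately: the complete intersection case by direct factorization, and the star configuration case by reducing to Lemma~\ref{EvoEssenconjLem}.

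For a complete intersection of points, $I^{(m)}=I^m$ for all $m\ge 1$, so writing $Nr-(N-1)=r+(r-1)(N-1)$ gives
\[I^{(Nr-(N-1))}=I^{Nr-(N-1)}=I^r\cdot I^{(r-1)(N-1)}\subseteq I^r\cdot M^{(r-1)(N-1)},\]
where the final containment uses $I\subseteq M$.

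For a star configuration with defining hyperplanes $L_1,\dots,L_h$ (so $h\ge N$ and $n=\binom{h}{N}$), the ideal $I$ is generated in degree $\alpha(I)=\mathrm{reg}(I)=h-N+1$, as noted in the proof of Corollary~\ref{starcor1}. I apply Lemma~\ref{EvoEssenconjLem} with $s=h-N+1$. The first hypothesis, $I^{(Nr-(N-1))}\subseteq I^r$, is Conjecture~\ref{Essenconj} for star configurations, known by \cite[Example 8.4.8]{refPSC}. The second hypothesis becomes
\[\alpha(I^{(Nr-(N-1))})\ge r(h-N+1)+(r-1)(N-1)=rh-(N-1).\]
The upper bound $\alpha(I^{(Nr-(N-1))})\le rh-(N-1)$ is witnessed by the monomial $F=L_1^{r-1}\cdots L_{N-1}^{r-1}L_N^r\cdots L_h^r$, which has degree $rh-(N-1)$ and vanishes at each star point $p_S=\bigcap_{i\in S}L_i$ to order $rN-|S\cap\{1,\dots,N-1\}|\ge rN-(N-1)$.

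For the matching lower bound, I induct on $r$, with a secondary induction on $N$ used in one subcase. The base $r=1$ is just $\alpha(I)=h-N+1$. For the inductive step, let $G\in I^{(Nr-(N-1))}$ be nonzero of degree $d$. If every $L_i$ divides $G$, then $L_1\cdots L_h\mid G$; since $L_1\cdots L_h\in I^{(N)}$ has degree $h$, writing $G=(L_1\cdots L_h)G'$ gives $G'\in I^{(N(r-1)-(N-1))}$ of degree $d-h$, and the inductive hypothesis on $r$ yields $d-h\ge(r-1)h-(N-1)$, whence $d\ge rh-(N-1)$. Otherwise some $L_i\nmid G$, and then $G|_{L_i}$ is a nonzero element of $J^{(Nr-(N-1))}$, where $J$ is the ideal of the sub-star configuration of $h-1$ hyperplanes in $L_i\cong\pr{N-1}$; the inductive hypothesis on $N$, applied in $\pr{N-1}$, supplies a bound on $\alpha(J^{(Nr-(N-1))})$ that dominates $rh-(N-1)$.

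The main obstacle is the last step: proving that the $\alpha$-bound arising from the sub-star configuration in $\pr{N-1}$ with $h-1$ hyperplanes is at least $rh-(N-1)$. Writing $Nr-(N-1)=Nq+t=(N-1)q'+t'$ in the two respective dimensions (with $1\le t\le N$ and $1\le t'\le N-1$), the required comparison reduces to the arithmetic inequality $(h-N)(q+t-t')\ge 0$, which holds because $h\ge N$ and a short case analysis on the residue $(q+t)\bmod(N-1)$ confirms $q+t\ge t'$ in every case.
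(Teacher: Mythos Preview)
Your overall strategy is the paper's: handle complete intersections by the direct factorization $I^{(rN-(N-1))}=I^{(r-1)(N-1)}I^r\subseteq M^{(r-1)(N-1)}I^r$, and for star configurations invoke Lemma~\ref{EvoEssenconjLem} with $s=\mathrm{reg}(I)=h-N+1$, using \cite[Example 8.4.8]{refPSC} for the containment hypothesis and the value $\alpha(I^{(Nr-(N-1))})=rh-(N-1)=r\,\mathrm{reg}(I)+(r-1)(N-1)$ for the degree hypothesis. The only difference is that the paper cites \cite[Lemma 8.4.7]{refPSC} for this last value, while you try to reprove it by a double induction.

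That reproof has a genuine gap in Case~2. When some $L_i\nmid G$ you restrict to $L_i\cong\pr{N-1}$ and appeal to ``the inductive hypothesis on $N$'' for the sub-star configuration $J$. But the statement you are inducting on only concerns symbolic exponents of the form $Nr-(N-1)$; in $\pr{N-1}$ it therefore only speaks about exponents $(N-1)r'-(N-2)$, and the exponent $Nr-(N-1)$ you actually need is of that form only when $r\equiv 1\pmod{N-1}$. Your final paragraph---which correctly reduces the desired comparison to $(h-N)(r-t')\ge 0$---silently uses the \emph{full} formula $\alpha(J^{(m)})=(q'+1)(h-1)-(N-1)+t'$ for arbitrary $m$, which is stronger than what your induction supplies. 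A concrete failure: for $N=3$, $h=4$, $r=2$ you need $\alpha(J^{(4)})\ge 6$ for the $3$-point star in $\pr 2$, but the inductive hypothesis gives only $\alpha(J^{(2r'-1)})\ge 3r'-1$, and the best this yields (via $J^{(4)}\subseteq J^{(3)}$, $r'=2$) is $5$. The remedy is either to cite the formula as the paper does, or to run the same Case~1/Case~2 induction on the general statement $\alpha(I^{(m)})=(q+1)h-N+t$ for all $m=Nq+t$ with $1\le t\le N$ (Case~1 then reduces $m$ to $m-N$, Case~2 reduces $N$ to $N-1$, with the base $N=1$ handled by the explicit $\pr 1$ computation).
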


\begin{proof}
Since for star configurations $\alpha(I^{(Nr-(N-1))})=r{\rm reg}(I) + (r-1)(N-1)$ 
by \cite[Lemma 8.4.7]{refPSC},
Lemma \ref{EvoEssenconjLem} applies with $s={\rm reg}(I)$
both conditions of Lemma \ref{EvoEssenconjLem}, so the result follows.
For complete intersections we have $I^{(rN-(N-1))}=I^{rN-(N-1)}
= I^{rN-r-(N-1)}I^r=I^{(r-1)(N-1)}I^r\subseteq M^{(r-1)(N-1)}I^r$.
\end{proof}

Of course, if Conjecture \ref{EvoEssenconj} is true, then so must be the following:

\begin{conj}\label{EvoEssenconj2}
Let $I\subseteq K[\pr N]$ be the ideal of a finite set of points $p_i\in\pr N$.
Then 
$$\alpha(I^{(rN-(N-1))})\ge r\alpha(I)+(r-1)(N-1)$$ 
for every $r>0$.
\end{conj}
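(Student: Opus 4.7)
The cleanest route to Conjecture~\ref{EvoEssenconj2} is through the stronger Conjecture~\ref{EvoEssenconj}, since the containment $I^{(rN-(N-1))} \subseteq M^{(r-1)(N-1)} I^r$ directly yields $\alpha(I^{(rN-(N-1))}) \ge r\alpha(I)+(r-1)(N-1)$. So my first step would be to extend Corollary~\ref{EvoEssenconjRem} beyond star configurations and complete intersections. By Lemma~\ref{EvoEssenconjLem}, whenever $I$ is generated in degree $s = \alpha(I)$ (for instance, for general points in $\pr 2$ by the proof of Proposition~\ref{genptsprop}) and the Essen-type containment $I^{(Nr-(N-1))} \subseteq I^r$ (Conjecture~\ref{Essenconj}) is known, Conjecture~\ref{EvoEssenconj2} is precisely the remaining ingredient needed to derive Conjecture~\ref{EvoEssenconj}; in such cases the two conjectures are effectively equivalent, and one may as well attack \ref{EvoEssenconj2} directly.

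The direct approach I would pursue first focuses on $N = 2$: take $F \in I^{(2r-1)}$ of minimal degree $d$ and choose a Chudnovsky sub-configuration $q_1,\ldots,q_t \subseteq \{p_i\}$ with $t = \binom{\alpha(I)+1}{2}$ and $J = \cap_i I(q_i)$ generated in degree $b = \alpha(I)$. Chudnovsky's argument with a single $B \in J_b$ coprime to $F$ gives $d \ge (2r-1)(b+1)/2 = rb + r - (b+1)/2$, which, after rounding up to an integer, already establishes the conjecture unconditionally when $b \le 2$. To handle $b \ge 3$, I would instead take $B \in (J^{(k)})_\beta$ vanishing to order $k$ at each $q_i$, so that B\'ezout gives $d\beta \ge (2r-1) k t$ via local intersection multiplicities $\mu_{q_i}(F,B) \ge (2r-1)k$. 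If $\beta = \alpha(J^{(k)})$ were as small as the Chudnovsky lower bound $k(b+1)/2$, this would yield $d \ge (2r-1)b$, well beyond the target.

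The main obstacle is precisely that $\alpha(J^{(k)})$ is typically much larger than $k(b+1)/2$: for a generic sub-configuration of $\binom{b+1}{2}$ points, Nagata-type asymptotics give $\gamma(J) = \sqrt t \sim b/\sqrt 2$, strictly larger than $(b+1)/2$ for $b$ large, and the B\'ezout refinement collapses back to the Chudnovsky bound. Closing the gap would seem to require either a carefully chosen sub-configuration with an embedded star (or similar structure) in the support of $I$, or a genuinely new intersection-theoretic ingredient that forces extra multiplicity at each $q_i$ beyond the product of local orders. In higher dimensions the situation is worse: the natural analogue of the Chudnovsky sub-configuration need not be generated in degree $\alpha(I)$, and on passing $r \to \infty$ the conjectured inequality is equivalent, modulo an $O(1)$ error, to Chudnovsky's open general bound $\gamma(I) \ge (\alpha(I)+N-1)/N$. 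My expectation is therefore that a full proof of Conjecture~\ref{EvoEssenconj2} is at least as hard as proving Conjecture~\ref{EvoEssenconj}, and that without new ideas the direct B\'ezout approach yields the conjecture only in the range $\alpha(I) \le 2$.
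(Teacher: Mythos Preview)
This statement is a \emph{conjecture}, and the paper does not prove it; it only establishes special cases (Remark~\ref{ETSr=1}, Corollary~\ref{starcor}, Example~\ref{genptsexample1}). Your proposal is likewise not a proof but an honest exploration of approaches and obstructions, and you explicitly say so in the final paragraph. So there is nothing to compare at the level of a full proof.

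At the level of partial results, your route and the paper's are genuinely different. You push Chudnovsky's B\'ezout argument directly: intersecting a minimal-degree $F\in I^{(2r-1)}$ with $B\in J_b$ gives $\alpha(I^{(2r-1)})\ge \lceil (2r-1)(b+1)/2\rceil$, which after rounding exactly meets the target $rb+r-1$ for $b=\alpha(I)\le 2$; this is correct and is not isolated in the paper. The paper instead reduces via the Waldschmidt constant: since $\alpha(I^{(2r-1)})/(2r-1)\ge\gamma(I)$ and the right-hand side $(r\alpha(I)+r-1)/(2r-1)$ is decreasing in $r$ (for $\alpha(I)>1$), it suffices to check the single inequality $\gamma(I)\ge(2\alpha(I)+1)/3$, which is then verified case-by-case for small $n$ using explicit values of $\gamma$. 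Your B\'ezout refinement buys the uniform bound for $\alpha(I)\le 2$ without computing $\gamma$; the paper's $\gamma$-reduction buys a finite verification scheme that handles many configurations with $\alpha(I)\ge 3$ where your approach stalls.

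Your diagnosis of the obstruction is accurate: dividing the conjectured inequality by $rN-(N-1)$ and letting $r\to\infty$ gives exactly $\gamma(I)\ge(\alpha(I)+N-1)/N$, so the conjecture is at least as strong as Chudnovsky's for $N\ge 3$, and your attempt to sharpen B\'ezout by replacing $B\in J_b$ with $B\in J^{(k)}$ collapses back to the same bound because $\alpha(J^{(k)})/k\to\gamma(J)\ge (b+1)/2$. There is no gap in your reasoning; there is simply no proof, which matches the status in the paper.
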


\begin{rem}\label{ETSr=1}\rm
Conjecture \ref{EvoEssenconj2} is clearly true if $\alpha(I)=1$ or if $r=1$.
It also holds if $\gamma(I)\ge (r\alpha(I)+(r-1)(N-1))/(rN-(N-1))$, since 
$\alpha(I^{(rN-(N-1))})/(rN-(N-1))\ge \gamma(I)$.
But if $\alpha(I)>1$, then $(r\alpha(I)+(r-1)(N-1))/(rN-(N-1))$ is biggest when $r$ is least,
so Conjecture \ref{EvoEssenconj2} will hold for all $r$
if $\gamma(I)\ge (r\alpha(I)+(r-1)(N-1))/(rN-(N-1))$ holds for $r=2$. 
\end{rem}

\begin{rem}\rm
As mentioned in Remark \ref{ChudConj}, if $I$ is the radical ideal of a finite set of 
points $p_i\in\pr N$, Chudnovsky conjectured
that $\gamma(I)\ge (\alpha(I)+N-1)/N$. It seems plausible in fact that $\gamma(I)>(\alpha(I)+N-1)/N$
unless either the points lie on a hyperplane or give a star configuration. 
This is however open, even for $N=2$. The best current result is:
if $N=2$ and if $\alpha(I^{(2)})/2=(\alpha(I)+1)/2$, then the points $p_i$ either lie on
a line or give a star configuration \cite{refBC}. If it were true that
$\gamma(I)>(\alpha(I)+N-1)/N$ unless either the points were contained in a hyperplane
or formed a star configuration, then Conjecture \ref{EvoEssenconj2} would at least hold for all
$r\gg0$, as the next result shows.
\end{rem}

\begin{cor}\label{starcor}
For the radical ideal $I$ of  a finite set of points $p_1,\ldots,p_n\in\pr N$ we have 
$$\alpha(I^{(Nr-N+1)})\ge r\alpha(I)+(r-1)(N-1)$$
for all $r\gg0$ 
if either $\alpha(I)=1$ or the points $p_i$ form a star configuration or $\gamma(I)>(\alpha(I)+N-1)/N$.
\end{cor}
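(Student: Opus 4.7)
The plan is to invoke the framework set up in Remark \ref{ETSr=1}, which reduces the desired inequality $\alpha(I^{(rN-N+1)})\ge r\alpha(I)+(r-1)(N-1)$ to verifying
\[
\gamma(I)\ge \frac{r\alpha(I)+(r-1)(N-1)}{rN-(N-1)}
\]
for the relevant range of $r$, since $\alpha(I^{(rN-N+1)})/(rN-N+1)\ge \gamma(I)$ always holds. I would then handle the three hypotheses of the corollary separately.

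The case $\alpha(I)=1$ is already treated in Remark \ref{ETSr=1}: the required bound reduces to $\gamma(I)\ge 1$, and this in turn follows from the elementary observation that for any nonzero $f\in I^{(m)}$ and any $i$ one has $\deg(f)\ge \mathrm{mult}_{p_i}(f)\ge m$, so $\alpha(I^{(m)})\ge m$ for every $m$ and hence $\gamma(I)\ge 1$.

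For the case $\gamma(I)>(\alpha(I)+N-1)/N$, I would first check, by a derivative computation in the real variable $r$ (and using $\alpha(I)>1$, which is forced by the hypothesis), that the right-hand side above is a strictly decreasing function of $r$ with limit $(\alpha(I)+N-1)/N$ from above as $r\to\infty$. The strict inequality $\gamma(I)>(\alpha(I)+N-1)/N$ then ensures that the required bound holds for every $r$ beyond an explicit threshold controlled by the size of the gap $\gamma(I)-(\alpha(I)+N-1)/N$, which is precisely the ``$r\gg 0$'' conclusion of the statement.

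The main obstacle is the star configuration case, because there $\gamma(I)=(\alpha(I)+N-1)/N$ holds with equality, so the asymptotic strategy of Remark \ref{ETSr=1} gives nothing on its own. Here, however, Corollary \ref{EvoEssenconjRem} has already done the real work: it establishes the stronger containment $I^{(rN-(N-1))}\subseteq M^{(r-1)(N-1)}I^r$ for every $r\ge 1$, from which the desired bound $\alpha(I^{(rN-(N-1))})\ge r\alpha(I)+(r-1)(N-1)$ follows at once by taking $\alpha$ of both sides. I would simply cite that corollary to dispose of this case.
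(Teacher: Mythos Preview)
Your proposal is correct and follows essentially the same strategy as the paper: all three cases are handled via the inequality $\alpha(I^{(rN-N+1)})\ge(rN-N+1)\gamma(I)$ together with a comparison of $\gamma(I)$ to $(r\alpha(I)+(r-1)(N-1))/(rN-N+1)$, and the star case is disposed of by exact knowledge of $\alpha$ for star configurations. The only real difference is organizational: for stars the paper quotes \cite[Lemma 8.4.7]{refPSC} directly to get $\alpha(I^{(rN-(N-1))})=r\alpha(I)+(r-1)(N-1)$ as an equality, whereas you route through Corollary~\ref{EvoEssenconjRem} and read off the inequality from the containment---but since the proof of Corollary~\ref{EvoEssenconjRem} itself rests on that same lemma, the two arguments have identical content.
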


\begin{proof}
If $\alpha(I)=1$, then $\alpha(I^{(rN-(N-1))})=rN-(N-1)=r\alpha(I)+(r-1)(N-1)$.
By \cite[Lemma 8.4.7 ]{refPSC}, if the points form a star configuration on $s$ lines, then 
$\alpha(I)=s-N+1$ and $\alpha(I^{(rN-(N-1))})=(r-1)s+s-N+1=r\alpha(I)+(r-1)(N-1)$.
If $\gamma(I)>(\alpha(I)+N-1)/N$, then $\gamma(I)=\delta+(g+N-1)/N$
for some $\delta>0$ where $g=\alpha(I)$. Hence for $r\gg0$ we have
$\alpha(I^{(Nr-(N-1))})\ge rg+(r-1)(N-1)$, since
$\alpha(I^{(Nr-(N-1))})\ge (Nr-(N-1))\gamma(I) = (Nr-(N-1))\delta+(Nr-(N-1))(g+N-1)/N
=rg+(r-1)(N-1)+(N(Nr-(N-1))\delta-(g-1)(N-1))/N$ and the last term is positive for $r\gg0$.
\end{proof}

\begin{example}\label{genptsexample1}\rm
We now check that Conjecture \ref{EvoEssenconj2} holds for
every set of $n\le8$ points of $\pr2$. By Remark \ref{ETSr=1}, it is enough to check 
$\gamma(I)\ge (2\alpha(I)+1)/3$.
When $N=2$, $\gamma(I)$ can be found for each set of $n\le8$ points of $\pr2$
using the results of \cite{refGuH, refGHM} \newline
(see \url{http://www.math.unl.edu/~bharbourne1/GammaFile.html}). It turns out that\newline
$\gamma(I)\ge (2\alpha(I)+1)/3$ holds for every configuration
of $n\le 8$ points except for four cases: the 3 points coming from
the star for 3 lines; the 6 points coming from the star
for 4 lines; 6 points where
3 of them are a star for 3 lines and an additional point is chosen on each of those 3 lines 
but such that these three additional points are not collinear; 
and 7 points where 6 of them form the star on 4 lines and an
additional point is placed on one of those 4 lines.
In each of these cases except for the star configuration of 6 points,
$\gamma(I)\ge (r\alpha(I)+r-1)/(2r-1)$ holds for $r=3$ (and hence by Remark \ref{ETSr=1} for $r\ge3$)
and $\alpha(I^{(2r-1)})\ge (r\alpha(I)+r-1)/(2r-1)$ holds for $r=2$.
This leaves the 6 point star configuration, but Corollary \ref{starcor} shows
that Conjecture \ref{EvoEssenconj2} holds for stars.
\end{example}

Now we show Conjecture \ref{EvoEssenconj} holds in the case of $n$ general points when $N=2$.
(We use the characteristic 0 hypothesis only for some values of $r$ and $n$ where 
we apply \cite{refD}; see the last paragraph of the proof.)

\begin{cor}\label{genptsexample2}
Let $I$ be the ideal of $n$ general points of $\pr2$; assume the ground field $K$ has 
${\rm char}(K)=0$. Then $I^{(2r-1)}I\subseteq M^{r-1}I^r$.
\end{cor}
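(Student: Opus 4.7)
The plan is to adapt the template of Lemma \ref{EvoEssenconjLem}, accounting for the extra factor of $I$ on the left. Let $s = {\rm reg}(I)$, so that $I$ is generated in degrees $\le s$ and $I^r$ is generated in degrees $\le rs$; consequently $(M^{r-1}I^r)_t = (I^r)_t$ for every $t \ge rs + (r-1)$. The proof then reduces to two sub-statements: (a) $I^{(2r-1)}I \subseteq I^r$, and (b) $\alpha(I^{(2r-1)}I) \ge rs + (r-1)$. With both in hand, the degree-splitting argument of Lemma \ref{EvoEssenconjLem} concludes: for $t \ge rs + (r-1)$ we have $(I^{(2r-1)}I)_t \subseteq (I^r)_t = (M^{r-1}I^r)_t$ by (a), and for $t < rs + (r-1)$ we have $(I^{(2r-1)}I)_t = 0$ by (b).

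For (a), chain $I^{(2r-1)} \cdot I \subseteq I^{(2r)}$ (the standard product property of symbolic powers) with $I^{(2r)} \subseteq I^r$ (from \cite{refELS}, \cite{refHH}, or equivalently Proposition \ref{genptsprop}). For (b), write $\alpha(I^{(2r-1)}I) = \alpha(I^{(2r-1)}) + \alpha(I) \ge (2r-1)\gamma(I) + \alpha(I)$. In the main case $s = \alpha(I)$, Chudnovsky's bound $\gamma(I) \ge (\alpha(I)+1)/2$ of Proposition \ref{Chudprop} gives $(2r-1)(\alpha(I)+1)/2 + \alpha(I) \ge r\alpha(I) + (r-1)$, which reduces to the trivial inequality $\alpha(I) \ge -1$. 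In the exceptional case $s = \alpha(I) + 1$, which arises for certain small $n$ including $n = 8, 9$, Chudnovsky's generic bound falls short, and one substitutes sharper Nagata-type lower bounds on $\gamma(I)$ for configurations of general points of $\pr 2$ from \cite{refD}, which are available only when ${\rm char}(K) = 0$.

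The main obstacle is precisely the case $s = \alpha(I) + 1$: for these $(n, r)$ the inequality in (b) is not delivered by Chudnovsky alone, and one must either compute $\gamma(I)$ directly in the handful of relevant $n$ (following the explicit values tabulated in the proof of Proposition \ref{genptsprop} and in Example \ref{genptsexample1}) or invoke \cite{refD}. This exceptional slice of the argument is the sole source of the ${\rm char}(K) = 0$ hypothesis in the corollary.
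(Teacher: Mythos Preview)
Your overall strategy—adapting the template of Lemma~\ref{EvoEssenconjLem} and exploiting the extra factor of $I$ to add $\alpha(I)$ to the left side of the degree estimate—is the same as the paper's, and step~(a) is correct (indeed slightly cleaner than the paper's, since $I^{(2r-1)}I\subseteq I^{(2r)}\subseteq I^r$ needs only \cite{refELS,refHH}, whereas the paper, which is really proving the stronger $I^{(2r-1)}\subseteq M^{r-1}I^r$, must invoke $I^{(2r-1)}\subseteq I^r$ for general points).

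The gap is in step~(b): your case division is backwards. For $n$ general points of $\pr2$ one has ${\rm reg}(I)=\alpha(I)$ only when $n=\binom{s+1}{2}$ is triangular; for every non-triangular $n$ one has ${\rm reg}(I)=\alpha(I)+1$. So what you call the ``exceptional case $s=\alpha(I)+1$'' is in fact the generic case, not something that ``arises for certain small $n$ including $n=8,9$.'' In that case Chudnovsky's bound $\gamma(I)\ge(\alpha(I)+1)/2=s/2$ gives your inequality only when $s\ge 2r$, so for each fixed non-triangular $n$ it fails for all large $r$. Your appeal to \cite{refD} does not close this: that reference computes $\alpha(I^{(m)})$ for specific finite $(n,m)$ in characteristic~$0$; it is not the source of a Nagata-type bound on $\gamma(I)$, and you would need such a bound for infinitely many $n$. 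The paper fills this gap by using $\gamma(I)\ge\sqrt{n-1}$ (from \cite{refPSC,refBH}) for large $n$, the explicit $\gamma(I)$ values for $n\le 9$, direct arguments for complete intersections and stars, and finally \cite{refD} for the residual finite range $10<n<153$, $2\le r\le 6$. (As an aside: if you run that same program with your extra $\alpha(I)$, the needed inequality becomes $\sqrt{n-1}\ge\frac{r(s+1)-s}{2r-1}$, whose right side never exceeds $(s+1)/2$; this already follows from $n-1\ge\binom{s}{2}$ for all $s\ge5$, so the appeal to \cite{refD}—and with it the characteristic-$0$ hypothesis—could be dropped. But that argument is not in your proposal.)
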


\begin{proof} As noted in Corollary \ref{EvoEssenconjRem}, 
$I^{(2r-1)}I\subseteq M^{r-1}I^r$ holds for complete intersections and for star configurations.
Thus it holds for $n=1,2,4$ since in these cases we have a
complete intersection, and it holds for $n=3$ since this is a star configuration.
It holds for $n=5$ by Proposition \ref{genptsprop} and Lemma \ref{5genpts},
since $I^{(2r-1)}=I^{(2(r-1))}I\subseteq M^{r-1}I^{r-1}I=M^{r-1}I^r$.
For larger $n$, note that $I^{(2r-1)}\subseteq I^r$ holds for general points by \cite[Example 8.4.9]{refPSC},
so $I^{(2r-1)}I\subseteq M^{r-1}I^r$ also holds  (by Lemma \ref{EvoEssenconjLem})
if $\gamma(I)\ge (r{\rm reg}(I)+r-1)/(2r-1)$. Therefore $I^{(2r-1)}I\subseteq M^{r-1}I^r$
holds for $n=6$ since then $\alpha(I)={\rm reg}(I)$ and 
in Example \ref{genptsexample1} we verified that 
$\gamma(I)\ge (r\alpha(I)+r-1)/(2r-1)$ holds for $r\ge2$ for every configuration
of $3\ne n\le 8$ general points.
Similarly, $I^{(2r-1)}I\subseteq M^{r-1}I^r$ holds for $n=7$ since (as noted in the proof of
Proposition \ref{genptsprop}) $I$ is generated
in degree $\alpha(I)$; now argue as in the case $n=6$. For $n=8$, ${\rm reg}(I)=4$
and (as in the proof of Proposition \ref{genptsprop})
we have $\gamma(I)=48/17\ge (r{\rm reg}(I)+r-1)/(2r-1)$
for $r=3$ (and hence for all $r\ge3$), while for $r=2$, we have 
$\alpha(I^{(2r-1)})=9=r{\rm reg}(I)+r-1$,
so $I^{(2r-1)}I\subseteq M^{r-1}I^r$ holds for $n=8$.
And for $n=9$ general points of $\pr2$, 
$\gamma(I)=3=(r{\rm reg}(I)+r-1)/(2r-1)$ for $r=2$, so again
$I^{(2r-1)}I\subseteq M^{r-1}I^r$ holds. 

Now let $n\ge 10$; as in the proof of Proposition \ref{genptsprop}, $\gamma(I)\ge \sqrt{n-1}$.
If $n=\binom{s+1}{2}$ for some $s\ge 4$, then ${\rm reg}(I)=\alpha(I)=s$,
and so we have $\sqrt{n-1}\ge (r{\rm reg}(I)+r-1)/(2r-1)$ for $r=2$,
hence for all $r$. So now assume
$\binom{s}{2}<n<\binom{s+1}{2}$ for some $s\ge 5$. 
Then ${\rm reg}(I)=s$, so we want to check that $\sqrt{n-1}\ge (2s+1)/3$, or that
$n-1\ge (2s+1)^2/9$, but $n-1\ge \binom{s}{2}$ and $\binom{s}{2}\ge(2s+1)^2/9$ for $s\ge 18$.
So we need to check $s\le 17$; i.e., $10<n<153$. By direct check we have
$\sqrt{n-1}\ge (r{\rm reg}(I)+r-1)/(2r-1)$ for $r=7$ for $10<n<153$.
So now we just need to check that 
$\alpha(I^{(2r-1)})\ge r\alpha(I)+(r-1)$ holds for $2\le r\le 6$ for $10<n<153$.
We verified this using \cite{refD} to determine $\alpha(I^{(2r-1)})$; note that
\cite{refD} assumes characteristic 0.
\end{proof}

\begin{rem}\rm
It may be worthwhile to consider the maximum height of the associated primes.
If $e$ is the maximum of the heights of the associated primes of a given homogeneous
ideal $I$, then from \cite{refHH} we know $I^{(re)}\subseteq I^r$, and it is conjectured
in \cite{refPSC} that $I^{(re-(e-1))}\subseteq I^r$. This raises the question of whether
$I^{(re)}\subseteq M^{r(e-1)}I^r$ and $I^{(re-(e-1))}\subseteq M^{(r-1)(e-1)}I^r$ are also true.
\end{rem}

\begin{rem}\rm
As a minor remark, we show how classical methods can be used to show
$I^{(2m)}\subseteq I^m$ in some cases of ideals of points $p_1,\ldots,p_n\in\pr2$.
Assume $I$ is a radical ideal for a finite set of points in $\pr2$. Assume the characteristic is 0, and that
$\alpha(I)={\rm reg}(I)$. Using the Euler identity as in the proof of Fact \ref{homogEMfact},
we have $I^{(2m)}\subseteq M^mI^{(m)}$. But $\alpha(I)={\rm reg}(I)$ implies
that $I^m=M^j\cap I^{(m)}$ for $j=m\alpha(I)=\alpha(I^m)$, since the saturation degree
of $I^m$ is bounded above by $m{\rm reg}(I)$ which by our hypothesis is equal to $\alpha(I^m)$,
so truncating $I^{(m)}$ at degree $\alpha(I^m)$ gives $I^m$ (see \cite[Remark 4.2]{refBH2}).

Since $M^mI^{(m)}\subseteq I^{(m)}$, we now see that
$M^mI^{(m)}\subseteq M^j\cap I^{(m)}=I^m$ if $M^mI^{(m)}\subseteq M^j$; i.e., if
$\alpha(M^mI^{(m)})\ge j$. But $\alpha(M^mI^{(m)})=\alpha(I^{(m)})+m\ge m\gamma(I)+m$
so $\alpha(M^mI^{(m)})\ge j$ holds if $m\gamma(I)+m\ge j=m\alpha(I)$; i.e., if
$\gamma(I)\ge \alpha(I)-1$. This holds if $(\alpha(I)+1)/2\ge \alpha(I)-1$ (i.e., if $\alpha(I)\le 3$)
since $\gamma(I)\ge (\alpha(I)+1)/2$.

This shows that $I^{(2m)}\subseteq I^m$ holds in characteristic 0
if $\alpha(I)={\rm reg}(I)$ and $3\ge \alpha(I)$. We note that
$\alpha(I)={\rm reg}(I)$ implies that $n$ is a binomial coefficient, so
we have either $n=6$ and $p_1,\ldots,p_6$
lie on a cubic but not on a conic (there are five essentially different such configurations
\cite{refGuH}), or $n=3$ and $p_1,p_2,p_3$ lie on a conic but not on a line,
or $n=1$.  

Additional cases follow from $\gamma(I)\ge \alpha(I)-1$. For example, for
any 10 points which do not lie on a 
cubic, we have $\alpha(I)={\rm reg}(I)$. If some 9 of the points lie on a smooth cubic,
then $3\alpha(I^{(m)})\ge 9m$ holds by B\'ezout's Theorem, 
so we have  $\gamma(I)\ge 3= \alpha(I)-1$ and thus
$I^{(2m)}\subseteq I^m$. 
\end{rem}

\renewcommand{\thethm}{\thesubsection.\arabic{thm}}
\setcounter{thm}{0}

\subsection{Further refinements}\label{sect4.2}

A refinement of $({}^\circ)$ of the Introduction is given in \cite[Lemme 7.5.2]{refW2}.
In our terms, this refinement is that
$$\frac{\alpha(I^{(m)})}{m+N-1}\leq \gamma(I).\eqno{({}^{**})}$$
for the radical ideal $I$ for a finite set of points in the complex projective space $\pr N$.
The proof given in \cite{refW2} uses complex analytic techniques;
for an easy proof using multiplier ideals, see 
\cite[Proposition 10.1.1 and Example 10.1.3]{refLa}.

In fact, by a variation of the proof given in the introduction for the case $m=1$, 
$({}^{**})$ holds for any homogeneous ideal $0\neq I\subseteq K[\pr N]$ over any field $K$. In particular, 
we have $I^{(t(m+N-1))}\subseteq (I^{(m)})^t$ for $m\geq 1$ by \cite{refHH1},
so $t\alpha(I^{(m)})\leq \alpha(I^{(t(m+N-1))})$. Dividing by $t(m+N-1)$
and taking limits as $t\to\infty$ gives $({}^{**})$. 

Again let $I$ be the radical ideal $I$ for a finite set of points in the projective space $\pr N$
but over the complex numbers.
A further refinement, proved using complex projective techniques, is given in \cite{refEV}
for the case that $N\geq 2$:
$$\frac{\alpha(I^{(m)})+1}{m+N-1}\leq \gamma(I).\eqno{({}^{***})}$$
This is just Proposition \ref{Chudprop} when $m=1$ and $N=2$.
Comparing $({}^{***})$ with Chudnovsky's conjecture 
$(\alpha(I)+N-1)/N\leq \gamma(I)$ raises the question: 

\begin{ques}\label{genChudques}
Let $I$ be the radical ideal $I$ for a finite set of points in $\pr N$.
Is it true for all $m\geq 1$ that 
$$\frac{\alpha(I^{(m)})+N-1}{m+N-1}\leq \gamma(I)?$$
\end{ques}

Using the fact that for the ideal $I$ of a star configuration defined by $s$ hyperplanes in $\pr N$
with $m=Ni+j$ for $0\leq i$ and $0<i\leq N$ we have $\alpha(I^{(m)})=(i+1)s-N+j$,
one can check that Question \ref{genChudques} has an affirmative answer for star configurations.

These speculations and observations raise two additional questions for the radical ideal
$I$ of a finite set of points in $\pr N$:

\begin{ques}\label{genChudques2}
Is it true for all positive integers $m$ and $t$ that $I^{(t(m+N-1))}\subseteq M^t(I^{(m)})^t$,
where $M\subset K[\pr N]$ is the ideal generated by the variables?
\end{ques}

If Question \ref{genChudques2} has an affirmative answer, 
then we obtain an alternate proof of $({}^{***})$ in the usual way.

Finally we ask:

\begin{ques}\label{genChudques3}
Is it true for all positive integers $m$ and $t$ 
that $I^{(t(m+N-1))}\subseteq M^{t(N-1)}(I^{(m)})^t$?
\end{ques}

If so, then Question \ref{genChudques} must also have an affirmative answer.

\end{document}